\newtheorem{theorem}{Theorem}[section]
\newtheorem{maintheorem}{Theorem}
\newtheorem{lemma}[theorem]{Lemma}
\newtheorem{corollary}[theorem]{Corollary}
\newtheorem{proposition}[theorem]{Proposition}
\theoremstyle{definition}
\newtheorem{remark}[theorem]{Remark}
\newtheorem*{claim*}{Claim}
\newtheorem*{remark*}{Remark}
\numberwithin{equation}{section}
\newcommand{\eqdef}{\stackrel{\scriptscriptstyle\rm def}{=}}
\def\sst{{ss}}
\def\uut{{uu}}
\def\bN{\mathbb{N}}
\def\bZ{\mathbb{Z}}
\def\bP{\mathbb{P}}
\def\bH{\mathbb{H}}
\def\bR{\mathbb{R}}
\def\cC{\EuScript{C}}
\def\cM{\EuScript{M}}
\def\cE{\EuScript{E}}
\def\cT{\EuScript{T}}
\def\ZZ {{\mathbb Z}}\def\NN {{\mathbb N}}\def\CC {{\mathbf C}}
\DeclareMathSymbol{\varnothing}{\mathord}{AMSb}{"3F}
\renewcommand{\emptyset}{\varnothing}
\title[Abundant rich phase transitions]{Abundant rich phase transitions\\ in step skew products}
\author[L.~J.~D\'iaz]{L. J. D\'\i az}
\address{Departamento de Matem\'atica PUC-Rio, Marqu\^es de S\~ao Vicente 225, G\'avea, Rio de Janeiro 225453-900, Brazil}
\email{lodiaz@mat.puc-rio.br}
\author[K.~Gelfert]{K.~Gelfert}
\address{Instituto de Matem\'atica Universidade Federal do Rio de Janeiro, Av. Athos da Silveira Ramos 149, Cidade Universit\'aria - Ilha do Fund\~ao, Rio de Janeiro 21945-909,  Brazil}\email{gelfert@im.ufrj.br}
\author[M.~Rams]{M. Rams} \address{Institute of Mathematics, Polish Academy of Sciences, ul. \'{S}niadeckich 8, 00-956 Warszawa, Poland}
\email{m.rams@impan.gov.pl}
\thanks{This paper was partially supported by CNPq, Faperj, and Pronex (Brazil). The authors thank the hospitality of IM UFRJ and PUC Rio.
}
\begin{document}

\begin{abstract}
	We study phase transitions for the topological pressure of geometric potentials of transitive sets. The sets considered are partially hyperbolic having a step skew product dynamics over a horseshoe with one-dimensional fibers corresponding to the central direction. The sets are genuinely non-hyperbolic  containing intermingled horseshoes of different hyperbolic behavior (contracting and expanding center). 

We prove that for every $k\ge 1$ there is a diffeomorphism $F$ with a transitive set $\Lambda$ as above such that the pressure map $P(t)=P(t\, \varphi)$ of the potential $\varphi= -\log \,\lVert dF|_{E^c}\rVert$ ($E^c$ the central direction) defined on $\Lambda$ has $k$ rich phase transitions. This means that there are parameters $t_\ell$, $\ell=1,\ldots,k$, where $P(t)$ is not differentiable and this lack of differentiability is due to the coexistence of two equilibrium states of $t_\ell\,\varphi$ with positive entropy and different Birkhoff averages.
Each phase transition is associated to a gap in the central Lyapunov spectrum of $F$ on $\Lambda$.
\end{abstract}

\keywords{thermodynamic formalism,  Lyapunov exponents, phase transitions, partially hyperbolic dynamics, skew product, transitivity}
\subjclass[2000]{%
37D25, 
37D35, 
28D20, 
28D99, 
37D30, 
37C29
}

\maketitle

\section{Introduction}\label{sec:1}

Thermodynamical formalism is a branch of ergodic theory, that studies quantifiers of invariant measures such as entropy and Lyapunov exponents.
 Those are interrelated by the pressure of the so-called \emph{geometric potentials} associated to the differential of the map.
 Recall that, given a continuous map $F\colon \Lambda\to \Lambda$ of a compact metric space $\Lambda$ and a continuous function $\varphi\colon\Lambda\to\bR$, its topological pressure $P_{F|\Lambda}(\varphi)$ with respect to $F|_\Lambda$ is defined in purely topological terms  and can be expressed via the variational principle
 \[
 	P_{F|\Lambda}(\varphi)=\sup\Big(h(\mu)+\int\varphi\,d\mu\Big),
 \]
 where $h(\mu)$ denotes the entropy of the measure $\mu$ and the supremum can be taken over all ergodic $F$-invariant probability measures (see~\cite{Wal:81}). A measure is called an \emph{equilibrium measure for $\varphi$} with respect to $F|_\Lambda$ if it attains the supremum in the above principle.

 This theory is quite well understood in the uniformly hyperbolic setting, see~\cite{Bow:08,Rue:78}. In this setting, for a given  H\"older continuous function $\varphi$ (sometimes called potential) the pressure function  $t\mapsto P_{F|\Lambda}(t\,\varphi)$, $t\in\bR$, is analytic and its derivatives are related to stochastic properties of the dynamics and of equilibrium states. Little is known beyond this hyperbolic context and naturally, a  crucial point is the differentiability of the pressure function.

Going beyond the hyperbolic scenery, in the non-hyperbolic context there are essentially two types of dynamics called \emph{critical} and \emph{non-critical} (see Preface in~\cite{BonDiaVia:05}). The former one is present for instance in the quadratic family and for diffeomorphisms with homoclinic tangencies, while the latter one is mainly associated to partially hyperbolic dynamics.
In this paper we consider partially hyperbolic transitive sets of a diffeomorphism $F$ with one-dimensional center direction $E^c$ and study the differentiability of the pressure for the geometric potential $\varphi=-\log\,\lVert dF|_{E^ c}\rVert$.
Let us observe that this map is always convex and thus non-differentiable in at most a countable number of points (see~\cite[Chapter 9]{Wal:81} for more information).
Moreover, in this setting equilibrium states for $t\,\varphi$ always exist for every $t\in\bR$ (see~\cite[Proposition 6]{CowYou:05} and also~\cite{DiaFis:11}).

Before stating the result of the paper, let us recall that the pressure function $t\mapsto P_{F|\Lambda}(t\,\varphi)$, $t\in\bR$, is said to exhibit a \emph{phase transition} at a parameter $t_c$ if it fails to be real analytic at $t_c$. This transition is of \emph{first order} if it fails to be differentiable at $t_c$. In some cases, the existence of a phase transition is associated to the existence of (at least) two (ergodic) equilibrium states with different Birkhoff averages. Note that to each ergodic measure there is associated a sub-gradient of the pressure function whose slope is given by the Birkhoff average of $\varphi$ with respect to the measure. The transition is said to be \emph{rich} if there are at least two equilibrium states for $t_c\,\varphi$ with positive entropy.

\begin{maintheorem}\label{t1}
	Given any $k\in\bN$ and any manifold $M$ of dimension at least $3$, there is a $C^\infty$ diffeomorphism $F$ of $M$ having a compact invariant set $\Lambda$ that is topologically transitive and partially hyperbolic with splitting
	\[T_\Lambda M=E^s\oplus E^c\oplus E^u\]
with non-trivial bundles and one-dimensional center bundle $E^c$ having the following properties:
	
	Consider the continuous potential $\varphi= -\log\,\lVert dF|_{E^c}\rVert$, the topological pressure $P(t)=P_{F|\Lambda}(t\,\varphi)$ has $k$ rich phase transitions. More precisely, there are parameters $t_k<\cdots<t_1<0$ such that $P(t)$ is real analytic in each interval $(-\infty,t_k), (t_k,t_{k-1}),\ldots,(t_2,t_1)$ and not differentiable at $t_k,\ldots,t_1$. Further, for every $t_\ell$, $\ell=1,\ldots,k$, there exist at least two equilibrium states for $t_\ell\,\varphi$ with respect to $F|_\Lambda$ both with positive entropy.
\end{maintheorem}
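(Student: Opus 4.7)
The plan is to construct $F$ explicitly as a step skew product $F(\xi,x)=(\sigma(\xi),f_{\xi_0}(x))$ over a full shift $\sigma\colon\Sigma_N\to\Sigma_N$, with one-dimensional $C^\infty$ fiber maps $f_0,\ldots,f_{N-1}$ of an interval $I$, and then to embed it into a $C^\infty$ diffeomorphism of $M$ via the standard Smale-type construction that realizes the base shift as a horseshoe in the hyperbolic directions $E^s\oplus E^u$. The goal is to prescribe in advance $k+1$ pairs $(h_\ell,\chi_\ell)$, $0\le\ell\le k$, with $0<h_0<\cdots<h_k$ and $\chi_0<\cdots<\chi_k<0$, chosen so that the upper envelope of the affine maps $t\mapsto h_\ell-t\chi_\ell$ consists of exactly $k+1$ linear pieces meeting at prescribed points $t_k<\cdots<t_1<0$, and then to arrange $F$ so that $P(t)$ coincides with this envelope on $(-\infty,0]$.

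First I would pick $k+1$ pairwise disjoint subshifts of finite type $\Sigma_0,\ldots,\Sigma_k\subset\Sigma_N$ on disjoint alphabets $A_\ell$ whose cardinalities realize $h_{\rm top}(\Sigma_\ell)=h_\ell$, and choose the fiber maps on each $A_\ell$ to form a uniformly contracting IFS whose Parry measure has central Lyapunov exponent exactly $\chi_\ell$. This produces $k+1$ hyperbolic basic sets $\Lambda_\ell\subset\Lambda$, each carrying a Bernoulli measure of maximal entropy with central exponent $\chi_\ell$. To obtain transitivity of the whole set $\Lambda$ I would add a small number of ``connecting'' symbols whose fiber maps transport the interval between the regions associated with different $A_\ell$; these connectors can be designed as a blender-type device in the fibers, following the framework of \cite{DiaFis:11}, with total topological entropy arbitrarily small compared to all the $h_\ell$.

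The variational principle immediately yields the lower bound $P(t)\ge\max_\ell(h_\ell-t\chi_\ell)$ by plugging in the Bernoulli measures of maximal entropy of the $\Lambda_\ell$. The crucial remaining task is the matching upper bound for $t\le 0$, which reduces to showing that the set $\{(\chi^c(\mu),h(\mu)):\mu\text{ ergodic}\}\subset\bR^2$ lies below the upper envelope of the affine functions $h_\ell-t\chi_\ell$. Once this is established, $P$ is piecewise affine (hence piecewise analytic) on $(-\infty,0]$ with breakpoints exactly at the $t_\ell$; at each $t_\ell$ the Bernoulli measures on $\Lambda_\ell$ and $\Lambda_{\ell+1}$ are distinct equilibrium states with positive entropy and with different Birkhoff averages (since $\chi_\ell\ne\chi_{\ell+1}$), giving the required rich phase transition.

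The hard part will be this upper bound. Transitivity of $\Lambda$ guarantees the existence of ergodic measures with central exponent anywhere in the interval $[\chi_0,\chi_k]$, so one cannot exclude exponents in the ``gaps'' $(\chi_\ell,\chi_{\ell+1})$; rather, one must show that an ergodic measure whose exponent falls in such a gap pays enough entropy cost to keep the point $(\chi^c(\mu),h(\mu))$ strictly below the envelope. To quantify this I would analyze the concave hull of the entropy-versus-exponent spectrum along the lines of the authors' previous work on step skew products: an ergodic measure whose exponent interpolates between $\chi_\ell$ and $\chi_{\ell+1}$ must spend a definite proportion of time on the connector symbols, which in turn bounds its entropy above by a convex combination of the $h_\ell$ minus a defect controlled by the (small) connector entropy. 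Calibrating the $h_\ell$, $\chi_\ell$, and the connector entropy simultaneously so that all $k$ prescribed breakpoints lie in $(-\infty,0)$ and no extraneous breakpoint is generated is the most delicate step of the construction.
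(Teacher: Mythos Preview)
Your plan diverges from the paper's precisely at the step you flag as hard, and the paper's mechanism is different in kind. The paper's horseshoes $\Lambda_1,\ldots,\Lambda_k$ all sit in the fiber boundary $\{x=0\}$ with \emph{expanding} (not contracting) center; they are cut out by frequency constraints on the symbol $2$ inside $\Sigma_{02}\subset\Sigma_{012}$, so that their central exponents fill pairwise disjoint intervals $[\alpha_\ell^-,\alpha_\ell^+]\subset[\log\beta_2,\log\beta_0]$. The decisive device is a genuine \emph{gap in the central Lyapunov spectrum}: the fiber cycle condition $f_1(1)=0$ forces every non-exceptional orbit to have upper central exponent at most $\log\widetilde\beta<\log\beta_2$ (Remark~\ref{remremrem}), and Lemma~\ref{l:hetero} forbids symbolic orbits in $\bH$ from ever passing between distinct $H_\ell$. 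Hence for $t\le t_0$ the full pressure equals the symbolic pressure $\bP(t)=\max_\ell P_{\sigma|H_\ell}(t\,\phi)$ outright (Lemma~\ref{lemmmmm}), and the $k$ transitions come from Proposition~\ref{p:main} with no entropy-cost estimate whatsoever, because there simply are no ergodic measures with exponents in the gaps. Note also that each $P_{\sigma|H_\ell}(t\,\phi)$ is real analytic but not affine, since $H_\ell$ carries a nondegenerate interval of averages.

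In your setup the upper bound actually fails as written. With the base a full $N$-shift and the $A_\ell$ disjoint sub-alphabets, the full shift on $A_\ell\cup A_{\ell+1}$ is already a subsystem of $\Lambda$: no connector symbol is needed to pass between the alphabets, and contracting fiber maps keep the whole orbit in the cube. This subsystem is topologically mixing and strictly contains $\Sigma_{A_\ell}$ and $\Sigma_{A_{\ell+1}}$, so its pressure strictly exceeds $\max(h_\ell-t\chi_\ell,\,h_{\ell+1}-t\chi_{\ell+1})$ for every $t$; in the model case of constant fiber derivatives it equals $\log\big(e^{h_\ell-t\chi_\ell}+e^{h_{\ell+1}-t\chi_{\ell+1}}\big)$, which beats your envelope by $\log 2$ at $t=t_\ell$ and smooths the corner into an analytic curve. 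The same phenomenon globally gives $P(0)=h_{\rm top}(\Lambda)\ge\log N>h_k$, so $P$ cannot match the envelope on $(-\infty,0]$. To salvage the idea you would have to forbid direct transitions between the $A_\ell$ at the symbolic level and isolate the pieces in the fiber---which is exactly what the paper achieves by placing the $H_\ell$ as disjoint subshifts of $\bH$ with no heteroclinic connections, separated from the rest of $\Lambda$ by the spectral gap---and that structural isolation, not a connector-entropy defect, is what produces the phase transitions.
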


The dynamics of the transitive set $\Lambda$ is a step skew product with one-dimensional fibers, semi-conjugate to a shift map over three symbols, and its central direction has mixed contracting and expanding behaviors. For instance, the sets of those hyperbolic periodic points which are contracting in the central direction and those which are expanding in the central direction are both dense in $\Lambda$. This sort of sets have also a rich fiber structure and contain uncountably many curves
(called spines) tangent to the central direction.
Topological properties and the rich fibre structure of such sets, called porcupine horseshoes, were studied in~\cite{DiaHorRioSam:09, DiaGel:12,DiaGelRam:}, compare also so-called bony attractors with a somehow similar structure in~\cite{Ily:10,Kud:10}.  The occurrence of phase transitions for the geometric potential were studied in~\cite{LepOliRio:11,DiaGel:12} (existence of \emph{one} transition) and \cite{DiaGelRam:11} (existence of \emph{one rich} transition).
\smallskip

Let us briefly describe the diffeomorphism $F$ in a neighborhood of $\Lambda$. Consider the cube $\widehat \CC=[0,1]^2$ and a diffeomorphism $\Phi$ defined on $\bR^2$ having a horseshoe $\Gamma$ in $\widehat\CC$ conjugate to the full shift $\sigma\colon\Sigma_{012}\to\Sigma_{012}$ of three symbols, $\Sigma_{012}=\{0,1,2\}^\bZ$. Denote by  $\varpi\colon \Gamma \to \Sigma_{012}$ the conjugation map $\varpi \circ \Phi=\sigma \circ \varpi$.
Let $\CC= \widehat \CC\times [0,1]$. We consider a map $F\colon\CC\to \bR^3$ defined by
\begin{equation}\label{e.defF}
    F(\widehat x,x) \eqdef
        (\Phi(\widehat x), f_{\varpi(\widehat x)}(x))
\end{equation}
where $f_{\varpi(\widehat x)}\colon [0,1]\to [0,1]$ are $C^1$ injective interval maps specified in Section~\ref{sec:1d}.
We also assume that the rate of expansion (contraction) of the horseshoe $\Gamma$ is stronger than any expansion (contraction) of $f_{\varpi(\widehat x)}$.
In this way the $DF$-invariant splitting $E^\sst\oplus E^c\oplus E^\uut$ given by
\begin{equation}\label{e.splitt}
    E^\sst\eqdef\bR\times\{(0,0)\}, \,
    E^c\eqdef\{(0,0)\}\times\bR, \,
    E^\uut\eqdef\{0\}\times\bR\times\{0\}
\end{equation}
is dominated and $E^{ss}$ and $E^{uu}$ are uniformly hyperbolic. We consider the maximal invariant set $\Lambda$ of $F$ in the cube $\CC=[0,1]^3$
\begin{equation}\label{def:Lambda}
	\Lambda\eqdef \bigcap_{j\in\ZZ}F^{-j}(\CC).
\end{equation}

Let us emphasize that \emph{transitivity} is a key property in our setting, as even in the hyperbolic case naturally there can occur phase transitions related to the existence of several basic pieces. Note also that there are phase transitions in the critical case associated to failure of transitivity related to renormalizable dynamics, see~\cite{Dob:09}.

We note that the $k$ phase transitions are obtained from $k$ corresponding gaps in the spectrum of the central Lyapunov exponents. The intervals between the spectral gaps correspond to ``lateral horseshoes'' which are associated to sub-shifts, exhibiting appropriate Birkhoff averages and entropy. This will imply that these transitions are rich.
Observe that the support of the relevant equilibrium states for $t\,\varphi$ jumps between invariant subsets as $t$ moves through the parameter of the phase transition.

\begin{figure}
\begin{minipage}[c]{\linewidth}
\centering
\vspace{0.5cm}
\begin{overpic}[scale=.35,
  ]{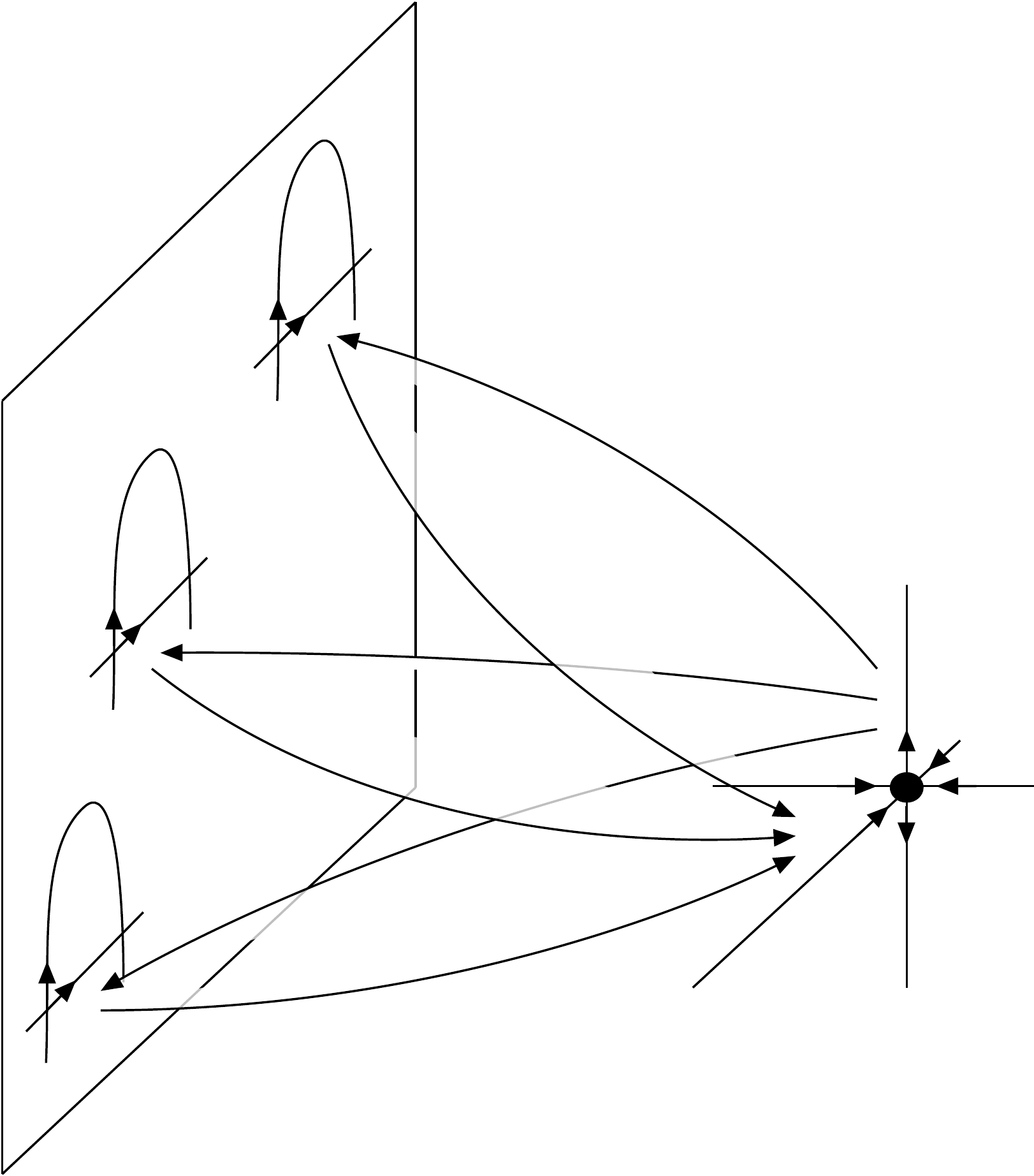}
      \put(80,27){\small$P$}	
      \put(11,28){\small$\Lambda_1$}	
      \put(16.5,58){\small$\Lambda_2$}	
      \put(28.5,88){\small$\Lambda_k$}	
  \end{overpic}
\caption{Heterodimensional cycles and connections}
\label{fig.sketch}
\end{minipage}
\end{figure}

The set $\Lambda$ that we consider is a special type of transitive set, called a homoclinic class. It was shown in~\cite{AbdBonCroDiaWen:07} that $C^1$-generically such classes have a convex Lyapunov spectrum. Thus, to obtain these gaps, we need to consider non-generic situations where the homoclinic class contains saddles of different types related by heterodimensional cycles (see~\cite[Chapter 6]{BonDiaVia:05}). Our study is also motivated by~\cite[Question 6.4]{BonDia:12} about the existence of multiple spectral gaps for homoclinic classes.

Two transitive hyperbolic sets of different \emph{index} (dimension of the unstable manifold) are related by a \emph{heterodimensional cycle} if their invariant manifolds meet cyclically (there are heteroclinic orbits going from one of the sets to the other and vice versa). Let us emphasize that the existence of such cycles is a typical feature in the partially hyperbolic setting. The understanding of the interplay of the sets in the cycle and the ``dominating'' hyperbolicity is a key step in the understanding of the global dynamics.

The heterodimensional cycle studied here are provided by a cycle condition in the fiber dynamics (see condition (F1)) which plays a somewhat similar role as the post-critical point in the quadratic family -- typical orbits only slowly approach to cycle sets which correspond to the critical point and the post-critical and give rise to some transient behavior and hence to spectral gaps.

A key feature of our construction is the existence of a cycle configuration involving horseshoes $\Lambda_1,\ldots,\Lambda_k\subset\Lambda$ (expanding in the central direction) and a saddle $P$ (contracting in the central direction) depicted in Figure~\ref{fig.sketch}.  Each horseshoe is, in some sense, an ``exposed'' parts of the dynamics. Each one  has different averages and thus is responsible for an interval of exponents in the central Lyapunov spectrum and these intervals are separated by gaps. These horseshoes play the role of exposed piece of the dynamics similar to the critical/post-critical point in the quadratic family.

Let us now contrast our non-critical example (the central dynamics does not have critical points) with the critical case of the quadratic family. Observe that the dynamics of a complex rational map on its (transitive) Julia set can have at most two phase transitions (\cite[Main Theorem]{PrzRiv:11}). Further,~\cite{CorRiv:a,CorRiv:b} present examples of complex quadratic polynomials having a first order phase transition with a unique associated equilibrium state  and with a phase transition with infinitely differentiable pressure function and no equilibrium state associated to the transition parameter, respectively.

In principle the number of phase transitions in a system can be arbitrarily large.
The example~\cite{Sar:00} of a topologically mixing countable Markov shift with a one-parameter family of piecewise constant potentials possesses a positive Lebesgue measure Cantor set of parameters at which the pressure is not analytic. However, in this example there do not exist equilibrium states associated to the parametrized potentials. The phase transition coincides with the change of the system from being recurrent to being transient (see also~\cite{IomTod:} for further discussion and references).

This paper is organized as follows. In Section~\ref{s:base} we select certain sub-shifts in the base dynamics that give rise to the subsets $\Lambda_1,\ldots,\Lambda_k$. In Section~\ref{sec:2.2} we define the fibre maps and complete the definition of the diffeomorphism $F$. In this section we present some topological properties of the underlying iterated function system and explain how these properties imply the transitivity of $\Lambda$. Finally, in Section~\ref{s:PROOF} we prove Theorem~\ref{t1}.

\section{Base dynamics}\label{s:base}

We now go to the details of the construction of $F$. The first step is the choice of appropriate sub-shifts in the base (horseshoe). The main result of this section is Proposition~\ref{p:main} stating differentiability properties of the pressure that is the symbolic counterpart of the one in Theorem~\ref{t1}.

\subsection{Construction of sub-shifts}\label{s:22}

We now choose a sub-shift $\bH$ of $\Sigma_{02}=\{0,2\}^\bZ\subset\Sigma_{012}$ that is a union of a finite number of disjoint topologically mixing pieces $H_\ell$, each of which having a different frequency of the symbol $2$.
We define the sub-shifts $H_\ell$ recursively.

We first give a general construction. Given a triple $(i,j,L)$ of integers $0\le i<j\le L$, we say that a finite word $(\xi_0\ldots \xi_{L-1})\in\{0,2\}^L$ is \emph{$(i,j,L)$-admissible} if
\[
	\#\big\{m\in\{0,\ldots,L-1\}\colon \xi_m=2\big\}\in\{i,\ldots,j\}.
\]
A finite word $(\xi_0 \ldots \xi_{n-1})$, $n>L$, is said to be \emph{$(i,j,L)$-admissible} if every sub-word $(\xi_k \ldots \xi_{k+L-1})$, $k=0,\ldots,L-n$, of length $L$  is admissible.

Fix some positive integer $k$. We define a finite sequence of triples $\cT_k=(i_\ell,j_\ell,L_\ell)_{\ell=1}^k$ of positive integers $i_\ell<j_\ell<L_\ell$ as follows.
We start with
\[
	0= i_1< j_1=L_1-(3k+1)<L_1.
\]	
Consider a strictly increasing sequence $(L_\ell)_{\ell=1}^{k}$.
For $\ell\ge1$ we choose recursively $(i_{\ell+1},j_{\ell+1},L_{\ell+1})$ with
\begin{equation}\label{eq:choicess}
\begin{split}
	j_{\ell+1} = L_{\ell+1}- (3k+1-3\ell),\quad
	i_{\ell+1} = j_{\ell+1}-1.
\end{split}	
\end{equation}
By construction, if the numbers $L_\ell$ are large enough, we have
\begin{equation}\label{eq:choice}
	L_\ell<i_{\ell+1}< j_{\ell+1}<L_{\ell+1},\quad
	L_{\ell+1}-i_{\ell+1} < \min_{n=1,\ldots,\ell}(L_n-j_n)-1.
\end{equation}
We will specify the sequence $(L_\ell)_{\ell=2}^{k}$ in two steps in Sections~\ref{ss.virgemsanta-1} and~\ref{ss.virgemsanta-2}.

For a fixed sequence of triples,
we define for each $\ell\ge1$ the sets
\[
	G_\ell
	\eqdef \Big\{\xi\in\Sigma_{02}\colon
		(\xi_0\ldots\xi_{L_\ell-1}) \text{ is }(i_\ell,j_\ell,L_\ell)
	\text{-admissible}\Big\}
\]
and
\[
	H_\ell
	\eqdef \Big\{\xi\in\Sigma_{02}\colon
	\sigma^n(\xi)\in G_\ell
	\text{ for every }n\in\bZ\Big\}
	= \bigcap_{s\in\bZ}\sigma^s(G_\ell).
\]
We consider also
\begin{equation}\label{def:Hk}
	\bH\eqdef
	\bigcap_{s\in\bZ}\sigma^s\Big(G_1\cup\ldots\cup G_k\Big)
	\supset H_1\cup\ldots\cup H_k.
\end{equation}

Given $t\in\bR$, consider the continuous potential $\phi\colon\Sigma_{012}\to\bR$ given by
\begin{equation}\label{eq2}
	\phi(\xi)\eqdef
	\begin{cases}
	 -\log\beta_0&\text{ if }\xi_0=0,\\
	 -\log\gamma&\text{ if }\xi_0=1,\\
	 -\log\beta_2&\text{ if }\xi_0=2.
	 \end{cases}
\end{equation}
The following is our main technical result about the topological pressure of $t\,\phi$ on the above defined sub-shifts (the proof is in Section~\ref{s:3.32}). We will study the topological pressure $\bP(t)= P_{\sigma|\bH}(t\,\phi)$ of $t\,\phi$ with respect to $\sigma\colon\bH\to\bH$.
By definition,
\[
	\bP(t)
	=\max_{\ell=1,\ldots,k}P_{\sigma|H_\ell}(t\,\phi)
\]

\begin{proposition}\label{p:main}
		There exist numbers $0>t_0>t_1>\cdots>t_k$ such that for every $\ell\in\{1,\ldots,k\}$ and every $t\in(t_\ell,t_{\ell-1})$ we have
	\[
		\bP(t)=P_{\sigma|H_\ell}(t\,\phi)
		>\max_{j\ne\ell}P_{\sigma|H_j}(t\,\phi).
	\]	
	Moreover, $\bP$ is real analytic in each interval $(t_\ell,t_{\ell-1})$.
	Further,  the left/right derivatives satisfy
	\[
		D^-\bP(t_\ell)<D^+\bP(t_\ell)
	\]
	and there exist equilibrium states $\mu_\ell^\pm$ for ${t_\ell}\,\varphi$ with respect to $\sigma|_{\bH}$ that both have positive entropy and Birkhoff average $-D^\pm \bP(t_\ell)$.
\end{proposition}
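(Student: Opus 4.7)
The plan is to decompose $\bH$ into the topologically mixing sub-shifts $H_1,\ldots,H_k$ and identify $\bP(t)$ as the pointwise maximum of the $k$ pressure functions $P_\ell(t)\eqdef P_{\sigma|H_\ell}(t\phi)$. Each $H_\ell$ is a sub-shift of finite type whose admissibility is determined by windows of length $L_\ell$, and topological mixing follows from the slack in the count condition (at least one symbol in each window can be freely toggled between $0$ and $2$). Since $\phi$ is locally constant -- it depends only on $\xi_0$ -- the classical Ruelle--Bowen thermodynamic formalism applies: $t\mapsto P_\ell(t)$ is real analytic and strictly convex on $\bR$, with a unique equilibrium state $\mu_{\ell,t}$ satisfying $P_\ell'(t)=\int\phi\,d\mu_{\ell,t}\eqdef\chi_\ell(t)$.

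I would then collect quantitative estimates for each $H_\ell$. For $\ell\ge 2$, the condition $i_\ell=j_\ell-1$ forces every $\sigma$-invariant measure $\mu$ on $H_\ell$ to satisfy $\mu([2])\in[i_\ell/L_\ell,j_\ell/L_\ell]$, an interval of width $1/L_\ell$; writing $\alpha_\ell\eqdef j_\ell/L_\ell$, the integral $\chi_\ell(t)$ therefore stays within $O(1/L_\ell)$ of
\[
c_\ell \eqdef \alpha_\ell(-\log\beta_2)+(1-\alpha_\ell)(-\log\beta_0),
\]
and the sequence $(c_\ell)$ is strictly monotone (provided the $\alpha_\ell$'s are strictly monotone, which is built into the choice of $(L_\ell)$, and $\beta_0\ne\beta_2$). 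The topological entropy $h_\ell\eqdef h(\mu_{\ell,0})=P_\ell(0)$ is strictly positive -- in each window of length $L_\ell$ one may place the $L_\ell-j_\ell$ or $L_\ell-i_\ell$ zeros in many ways -- and $h_\ell\to 0$ as $L_\ell\to\infty$ with $L_\ell-j_\ell$ fixed. For $\ell=1$ the admissibility is much weaker, so $h_1$ can be arranged close to $\log 2$ and $\chi_1(t)$ truly varies with $t$. The upshot is that each $P_\ell$ is essentially affine with slope $c_\ell$ and ordinate $h_\ell$, the two sequences being monotone in opposite directions as $\ell$ grows.

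The heart of the proof is the inductive choice of $(L_\ell)_{\ell=2}^k$, deferred to Sections~\ref{ss.virgemsanta-1} and~\ref{ss.virgemsanta-2}, ensuring that the almost-affine graphs $P_\ell$ lie in convex position and cross in the correct order. Roughly, the crossing of $P_\ell$ and $P_{\ell+1}$ occurs at $t_\ell\approx(h_\ell-h_{\ell+1})/(c_{\ell+1}-c_\ell)<0$, and by forcing $L_{\ell+1}\gg L_\ell$ the entropy gap $h_\ell-h_{\ell+1}$ dominates, making every $t_\ell$ strictly negative, the sequence $|t_\ell|$ strictly increasing in $\ell$, and -- crucially -- $P_\ell$ strictly larger than every $P_j$ with $j\ne\ell$ throughout $(t_\ell,t_{\ell-1})$ (with $t_0$ set slightly below $0$ so that $P_1$ dominates there). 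Verifying this simultaneous system of strict inequalities, so that the upper envelope $\bP=\max_\ell P_\ell$ reduces to a single $P_\ell$ on each sub-interval, is what I expect to be the main technical obstacle.

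Granted this, the conclusion is immediate. On each $(t_\ell,t_{\ell-1})$ the identity $\bP=P_\ell$ together with analyticity of $P_\ell$ yields the claimed real analyticity. At $t_\ell$ one has $\bP(t_\ell)=P_\ell(t_\ell)=P_{\ell+1}(t_\ell)$, so both $\mu_\ell^+\eqdef\mu_{\ell,t_\ell}$ and $\mu_\ell^-\eqdef\mu_{\ell+1,t_\ell}$ are equilibrium states for $t_\ell\phi$ on $\bH$; their entropies are close to $h_\ell$ and $h_{\ell+1}$ respectively, hence positive. Finally, $D^+\bP(t_\ell)=\chi_\ell(t_\ell)$ and $D^-\bP(t_\ell)=\chi_{\ell+1}(t_\ell)$, and $\chi_\ell(t_\ell)>\chi_{\ell+1}(t_\ell)$ (inherited from $c_\ell>c_{\ell+1}$), giving $D^-\bP(t_\ell)<D^+\bP(t_\ell)$ and distinct Birkhoff averages $-D^\pm\bP(t_\ell)$ for $\mu_\ell^\pm$.
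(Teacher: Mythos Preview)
Your approach is essentially the same as the paper's: both decompose $\bP$ as $\max_\ell P_\ell$, invoke Ruelle--Bowen on the mixing sub-shifts $H_\ell$ for analyticity, and use the cone/near-affine geometry (disjoint slope-ranges $[\alpha_\ell^-,\alpha_\ell^+]$ together with the ordered entropies $h_\ell$) to locate the crossings $t_\ell$ and to force the strict one-sided derivative jump.

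Two small differences are worth flagging. First, your construction of the equilibrium states is cleaner than the paper's: you take $\mu_\ell^+=\mu_{\ell,t_\ell}$ and $\mu_\ell^-=\mu_{\ell+1,t_\ell}$ directly as the unique Gibbs measures on $H_\ell$ and $H_{\ell+1}$, whereas the paper passes to weak$\ast$ limits of $\mu_{\ell,t}$ as $t\to t_\ell^\pm$; since each $H_\ell$ is a mixing SFT, your direct choice already gives ergodic equilibrium states with the correct averages. Second, your justification for positive entropy (``close to $h_\ell$'') is valid for $\ell\ge 2$, where the narrow window $j_\ell-i_\ell=1$ makes $P_\ell$ genuinely near-affine, but it does not cover $\mu_1^+$ on $H_1$, where $i_1=0$ and the slope $\chi_1(t)$ varies substantially; at the very negative parameter $t_1$ the entropy $h(\mu_{1,t_1})=P_1(t_1)-t_1P_1'(t_1)$ need not be close to $h_1$. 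The paper closes this with a convexity argument (the tangent-line intercept $t\mapsto P_\ell(t)-tP_\ell'(t)$ is increasing on $(-\infty,0)$, so vanishing at $t_\ell$ would force negative entropy for $t<t_\ell$), or one can simply cite that the Gibbs state on a mixing SFT with positive topological entropy always has positive entropy.
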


In the following subsections we specify certain averaging properties of the sub-shift guaranteed by an appropriate choice of the sequence $(r_\ell)_{\ell=1}^k$. However, we need to start from topological properties of $H_k$.

\subsection{Topological properties}

\begin{lemma} \label{mixtime}
Let $A,B$ be $(i,j,L)$-admissible. Then there exist a word $C$ of length not greater than $2L\cdot \min\{j, L-i\}$ such that $ACB$ is $(i,j,L)$-admissible.
\end{lemma}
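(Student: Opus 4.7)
The plan is to reduce the two-sided bridging question to two one-sided extension problems through a fixed ``buffer'' pattern. First, exploit the symmetry obtained by swapping the symbols $0$ and $2$: this exchanges the parameters $(i,j)$ with $(L-j,L-i)$ and leaves $m \eqdef \min\{j,L-i\}$ unchanged, so without loss of generality we may assume $m=j$. Next, fix a length-$(L-1)$ window $\pi$ of the periodic sequence $(2^j0^{L-j})^\infty$; every length-$L$ subword of this periodic sequence has exactly $j$ twos and is thus $(i,j,L)$-admissible.

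The core step is the following one-sided extension claim:
\begin{claim*}
For every $(i,j,L)$-admissible word $W$ with $|W|\ge L$, there is a word $S$ of length at most $Lj$ such that $WS$ is $(i,j,L)$-admissible and ends with $\pi$.
\end{claim*}
Granting this claim, we apply it once to $A$ (obtaining an admissible $AS_A$ ending in $\pi$) and once in its time-reversed form to $B$ (obtaining an admissible $S_BB$ starting with $\pi$), with $|S_A|,|S_B|\le Lj$. Setting $C\eqdef S_AS_B$, we have $|C|\le 2Lj = 2L\,m$. Every length-$L$ window of $ACB=(AS_A)(S_BB)$ either lies inside the admissible $AS_A$, inside the admissible $S_BB$, or straddles the junction between $S_A$ and $S_B$. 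In the last case its $L$ symbols come from two consecutive copies of $\pi$, i.e., from a length-$(2L-2)$ window of $(2^j0^{L-j})^\infty$, and thus contain exactly $j$ twos, so the window is admissible.

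To prove the claim, we append symbols to $W$ one at a time in the following greedy fashion: at each step, append the next symbol of the periodic pattern $(2^j0^{L-j})^\infty$ (at the appropriate phase) whenever doing so preserves admissibility, and otherwise append the unique symbol that keeps the sliding count inside $[i,j]$ and moves it toward $j$. Viewing the count of twos in the trailing length-$L$ window as a walk on $[i,j]$ with unit steps, a short case analysis on the symbol sliding out of the window shows that each ``misalignment'' with the periodic target is corrected within at most $L$ appended symbols (one full period), and at most $j$ such misalignments can occur before the trailing $(L-1)$-block equals $\pi$. This yields $|S|\le Lj$.

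The hard part is the detailed verification of this one-sided extension in the regime $0<i<j<L$, where both constraints are active simultaneously. There the next admissible symbol is often forced by the current state of the window, and one must check that the resulting forced evolution does align with $\pi$ within the allotted $Lj$ steps. The key is that the oldest symbol leaving the window is entirely determined by past choices, and this structured slack is what allows the walk to navigate into the target periodic pattern without exceeding the stated length bound.
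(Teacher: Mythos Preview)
Your argument has a concrete error and an unproved core step. The error is at the junction: two copies $\pi\pi$ of a length-$(L-1)$ window of the period-$L$ sequence $(2^j0^{L-j})^\infty$ are \emph{not} a subword of that sequence, contrary to your claim. Indeed, if $\pi$ has $c$ twos, then the length-$L$ window of $\pi\pi$ starting at position $k$ has exactly $c$ or $c+1$ twos according as $\pi_k=0$ or $\pi_k=2$; so if $\pi$ omits a $0$ from the period ($c=j$), some window of $\pi\pi$ has $j+1$ twos and is inadmissible. This is repairable by choosing $\pi$ to omit a $2$ (so $c=j-1$), but your reasoning and your ``exactly $j$ twos'' conclusion are both wrong as stated. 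The gap is more serious: the one-sided extension claim --- that one can append at most $Lj$ symbols and land on the suffix $\pi$ while staying admissible --- is the crux of your proof, and you do not prove it. The greedy sketch together with the bare assertion that ``at most $j$ misalignments can occur'' is not an argument, and you yourself flag the regime $0<i<j<L$ as the ``hard part'' left unverified. Note that the bound $Lj$ per side cannot be weakened: even $2Lj$ per side would yield $|C|\le 4Lj$, twice the target.

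By contrast, the paper's proof is a one-line observation: if admissible length-$L$ words $D,E$ differ in a single position then $DE$ is admissible, since every length-$L$ window of $DE$ has the $2$-count of either $D$ or $E$. One then interpolates directly from (the last $L$ symbols of) $A$ to (the first $L$ symbols of) $B$ by a chain $A=D_0,D_1,\ldots,D_n=B$ of admissible length-$L$ words changing one symbol at a time, and sets $C=D_1\cdots D_{n-1}$. The chain length $n$ is at most the Hamming distance, which is at most $2\min\{j,L-i\}$ since each of $A,B$ has at most $j$ twos and at most $L-i$ zeros. No buffer pattern, no greedy extension, no case analysis.
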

\begin{proof}
Without weakening of assumptions we can assume that $A,B$ are of length $L$. Note that if $D$ and $E$ are $(i,j,L)$-admissible words of length $L$ that differ at most in one place then the word $DE$ is also $(i,j,L)$-admissible. So, when we take a sequence of $(i,j,L)$-admissible words $D_0=A$, $D_1,\ldots,D_{n-1}$, $D_n=B$ such that $D_m$ and $D_{m+1}$ differ in only one place then the word $AD_1D_2\ldots D_{n-1}B$ is $(i,j,L)$-admissible. As $A$ and $B$ differ in at most $2\min(j,L-i)$ places, the assertion follows.
\end{proof}

\begin{corollary} \label{mixing}
$(H_\ell,\sigma)$ is topologically mixing for every $\ell=1,\ldots,k$.
\end{corollary}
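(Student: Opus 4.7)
The plan is to derive topological mixing from two ingredients: transitivity of $(H_\ell,\sigma)$, which is essentially supplied by Lemma~\ref{mixtime}, and the existence of admissible periodic orbits in $H_\ell$ of coprime periods. Once I have both, a standard argument for subshifts of finite type (strong connectedness of the transition graph plus coprime cycle lengths imply primitivity, hence topological mixing) will conclude.

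Transitivity is essentially immediate: given cylinders $[A],[B]\subset H_\ell$, after replacing $A,B$ by suitable $(i_\ell,j_\ell,L_\ell)$-admissible blocks of length $L_\ell$, Lemma~\ref{mixtime} produces an admissible connector $C$ of bounded length so that $ACB$ is admissible, and hence extends to a point of $H_\ell$ lying in $[A]\cap\sigma^{-n}[B]$ for $n=|A|+|C|$.

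For coprime periodic orbits, I propose to take
\[
W_1 \eqdef \underbrace{2\cdots 2}_{j_\ell}\underbrace{0\cdots 0}_{L_\ell-j_\ell}
\]
of length $L_\ell$. Every length-$L_\ell$ window of $W_1^\infty$ is a cyclic rotation of $W_1$ and thus contains exactly $j_\ell$ twos, so $W_1^\infty\in H_\ell$ is periodic of period dividing $L_\ell$. Next, a direct window count will show that $W_1\,0\,W_1$ is also admissible: each length-$L_\ell$ window straddling the inserted $0$ contains either $j_\ell$ or $j_\ell-1$ twos, both of which lie in $[i_\ell,j_\ell]=[j_\ell-1,j_\ell]$ for $\ell\geq 2$ (and in $[0,j_1]$ for $\ell=1$, provided $L_1\geq 3k+2$ so that $j_1\geq 1$; this is compatible with the later constraints on $(L_\ell)$). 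The same local computation shows that any concatenation of blocks $W_1$ and $0\,W_1$ (in arbitrary order) placed after an initial $W_1$ is admissible. Hence $H_\ell$ contains admissible closed loops based at the ``configuration'' $W_1$ of lengths $L_\ell$ and $L_\ell+1$, which are coprime.

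To assemble mixing, given admissible $A,B$ of length $L_\ell$, apply Lemma~\ref{mixtime} twice to find connectors $C_1,C_2$ with $AC_1W_1$ and $W_1C_2B$ both admissible. For any $a,b\geq 0$ and any interleaving $\Omega_{a,b}$ of $a$ copies of the block $W_1$ and $b$ copies of the block $0\,W_1$, the word $AC_1W_1\Omega_{a,b}C_2B$ is admissible of length $|A|+|C_1|+|C_2|+|B|+(a+1)L_\ell+b(L_\ell+1)$. Since $\gcd(L_\ell,L_\ell+1)=1$, the numerical semigroup $\{aL_\ell+b(L_\ell+1):a,b\geq 0\}$ contains every integer beyond the Frobenius threshold, so all sufficiently large target lengths with prefix $A$ and suffix $B$ are realized, yielding topological mixing of $(H_\ell,\sigma)$. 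The principal obstacle is the combinatorial verification that $W_1\,0\,W_1$ and its iterated concatenations remain $(i_\ell,j_\ell,L_\ell)$-admissible; after that checkpoint the Frobenius step is routine.
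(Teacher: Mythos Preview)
Your proposal is correct. The paper offers no proof of this corollary, treating it as immediate from Lemma~\ref{mixtime}; you are supplying the argument that is left implicit. Note that Lemma~\ref{mixtime} and its proof naturally produce connectors whose lengths are multiples of $L_\ell$, which yields transitivity but not aperiodicity, so the coprime-period ingredient you add is genuinely what is needed to upgrade to mixing.

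The combinatorial checkpoint you flag goes through: writing any concatenation of $W_1$ and $0W_1$ blocks as $2^{j_\ell}0^{c_0}2^{j_\ell}0^{c_1}\cdots$ with $c_m\in\{L_\ell-j_\ell,L_\ell-j_\ell+1\}$, consecutive $2$-blocks begin at distance $L_\ell$ or $L_\ell+1$, so every length-$L_\ell$ window meets one or two such blocks and contains exactly $j_\ell$ or $j_\ell-1$ twos, hence is admissible. The gluing of $AC_1W_1$, $W_1\Omega_{a,b}$, and (last $W_1$)$C_2B$ into a single admissible word is sound because $\lvert W_1\rvert=L_\ell$ forces each length-$L_\ell$ window to lie entirely in one of these three overlapping pieces.

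Two minor remarks. First, the condition $j_1\ge 1$ you single out for $\ell=1$ is already guaranteed by the paper's standing hypothesis $0=i_1<j_1$. Second, to conclude you should extend the finite admissible word $AC_1W_1\Omega_{a,b}C_2B$ to a bi-infinite point of $H_\ell$; this is immediate by one more application of Lemma~\ref{mixtime} on each side followed by padding with $W_1^\infty$.
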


\subsubsection{Birkhoff averages}\label{ss.virgemsanta-1}

Given constants $\gamma\in(0,1)$ and $1<\beta_2<\beta_0$ (these constants will be specified in Section~\ref{sec:2.2}) and a triple $(i,j,L)$, we associated the following numbers
\begin{equation}\label{here}\begin{split}
		\alpha^-(i,j,L)&\eqdef
			\frac{i\log\beta_0+(L-i)\log\beta_2}
				{L}
				= \log\beta_0+\frac{L-i}{L}\log\frac{\beta_2}{\beta_0},\\
		\alpha^+(i,j,L)&\eqdef	
			\frac{j\log\beta_0+(L-j)\log\beta_2}
				{L}
				=  \log\beta_0+\frac{L-j}{L}\log\frac{\beta_2}{\beta_0}.
\end{split}\end{equation}
We write $\alpha^\pm_\ell\eqdef\alpha^\pm(i_\ell,j_\ell,L_\ell)$.
Observe that to obtain $\alpha^+_\ell<\alpha^-_{\ell+1}$ it is enough to have
$(L_\ell-j_\ell)/L_\ell>(L_{\ell+1}-i_{\ell+1})/L_{\ell+1},$
which follows from \eqref{eq:choicess}.

\subsubsection{Entropy constants}\label{ss.virgemsanta-2}

Associated to the sequence of triples there are also defined entropy constants $h_\ell$ that we will estimate in the following.

\begin{proposition} \label{prop:entropy}
There exist constants $K_1, K_2>0$ such that
\[
K_1 \frac {\log L_\ell} {L_\ell} \leq h_\ell \leq K_2 \frac {\log L_\ell} {L_\ell}.
\]
\end{proposition}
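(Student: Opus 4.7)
The plan is to bound $h_\ell$ both from above and from below by matching-order estimates. Both rest on the combinatorics of $(i_\ell,j_\ell,L_\ell)$-admissible words and on the mixing estimate of Lemma~\ref{mixtime}.

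For the upper bound I would use the standard inequality $h(\Omega)\le\frac{1}{n}\log\#\mathcal{L}_n(\Omega)$ applied to $\Omega=H_\ell$ with $n=L_\ell$, where $\mathcal{L}_n(\Omega)$ is the set of admissible length-$n$ words. By construction $\mathcal{L}_{L_\ell}(H_\ell)$ coincides with the set of $(i_\ell,j_\ell,L_\ell)$-admissible words; for $\ell\ge 2$, where $j_\ell-i_\ell=1$, this has cardinality $\binom{L_\ell}{i_\ell}+\binom{L_\ell}{j_\ell}$. Setting $c_\ell:=L_\ell-j_\ell=3k+1-3(\ell-1)\le 3k+1$, independent of $L_\ell$, the elementary estimate $\binom{L_\ell}{c_\ell+1}\le L_\ell^{c_\ell+1}/(c_\ell+1)!$ gives $\log\#\mathcal{L}_{L_\ell}(H_\ell)\le(c_\ell+1)\log L_\ell+O_k(1)$, and hence $h_\ell\le K_2\log L_\ell/L_\ell$ with $K_2$ depending only on $k$.

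For the lower bound I would glue length-$L_\ell$ admissible words via Lemma~\ref{mixtime}. That lemma supplies, for each ordered pair $(u,v)$, an admissible bridge $C(u,v)$ of length at most $M_\ell:=2L_\ell\min(j_\ell,L_\ell-i_\ell)=2L_\ell(c_\ell+1)$. Fix such a bridge for each pair, padded if necessary to exact length $M_\ell$ using the topological mixing of Corollary~\ref{mixing}. Each $m$-tuple $(u_1,\ldots,u_m)$ then determines an admissible word
\[
W(u_1,\ldots,u_m)=u_1\,C(u_1,u_2)\,u_2\,C(u_2,u_3)\,\cdots\,u_m
\]
of fixed length $n_m:=mL_\ell+(m-1)M_\ell\le mL_\ell(2c_\ell+3)$. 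Because the $u_i$ occupy predetermined positions inside $W$, the assignment $(u_1,\ldots,u_m)\mapsto W$ is injective, so $\#\mathcal{L}_{n_m}(H_\ell)\ge N_\ell^m$ with $N_\ell:=\#\mathcal{L}_{L_\ell}(H_\ell)$. Dividing by $n_m$ and letting $m\to\infty$,
\[
h_\ell\;\ge\;\frac{\log N_\ell}{L_\ell(2c_\ell+3)}\;\ge\;K_1\,\frac{\log L_\ell}{L_\ell},
\]
using $\log N_\ell\ge(c_\ell+1)\log L_\ell-O_k(1)$ and that $(c_\ell+1)/(2c_\ell+3)$ is bounded below uniformly in $\ell$.

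The most delicate point is the gluing step: one needs all bridges to have the \emph{same} length $M_\ell$, so that the positions of the $u_i$ inside $W$ are predetermined and the encoding is truly injective. Extending a bridge of length less than $M_\ell$ to exact length $M_\ell$ is done by inserting an admissible loop, which exists by topological mixing; this padding is routine but must be checked carefully.
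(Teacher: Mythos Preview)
Your proposal is correct and follows essentially the same route as the paper: the upper bound comes from $h_\ell\le L_\ell^{-1}\log C_\ell$ with $C_\ell=\binom{L_\ell}{i_\ell}+\binom{L_\ell}{j_\ell}$, and the lower bound from concatenating length-$L_\ell$ admissible blocks via the bridges of Lemma~\ref{mixtime}. The paper (like you) restricts to $\ell\ge2$ and obtains the same $\log C_\ell\asymp\log L_\ell$ estimate.

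The one place where you are actually more careful than the paper is the injectivity of the encoding in the lower bound: the paper simply counts cylinders of length \emph{at most} a given bound and asserts $\theta(\cdot)\ge C_\ell^n$, leaving the reader to see why distinct $n$-tuples yield distinct cylinders despite the variable bridge lengths. Your device of padding all bridges to a common length $M_\ell$ makes this transparent. One small remark on that step: invoking abstract topological mixing does not by itself tell you that the mixing time is $\le M_\ell$, so it does not directly produce a bridge of length exactly $M_\ell=2L_\ell(c_\ell+1)$. The clean justification is to go back to the \emph{proof} of Lemma~\ref{mixtime}: the bridge there is a concatenation $D_1\cdots D_{n-1}$ of length-$L_\ell$ blocks, and one can prepend copies of $D_0=u$ (since $uu$ is trivially admissible) to reach any multiple of $L_\ell$ that is $\ge(n-1)L_\ell$; in particular exactly $M_\ell$. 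With that tweak your argument is complete.
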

\begin{proof}
We can assume that $\ell >1$.
Note that by a slight modification of~\cite[Theorem 7.13 (i)]{Wal:81} the entropy $h_\ell\eqdef h(\sigma|_{H_\ell})$  of the sub-shift satisfies
\[
	h_\ell=\lim_{n\to\infty}\frac 1 n \log \theta(n),
\]
where $\theta(n)$ denotes the number of all cylinders $[\xi_0\ldots\xi_{m-1}]$, $m\le n$, in $H_\ell$. Note that in particular the limit exists and it is enough to consider some subsequence.

Observe that the number of cylinders of length $L_\ell$ in $H_\ell$ is given by
\[
	 C_\ell\eqdef \frac{L_\ell!}{i_\ell!(L_\ell-i_\ell)!}+
		\frac{L_\ell!}{j_\ell!(L_\ell-j_\ell)!}.
\]	
Note that

\[
\frac {j_\ell^{L_\ell - j_\ell}} {(L_\ell-i_\ell)!} \leq C_\ell \leq L_\ell^{L_\ell-i_\ell},
\]
hence (up to a multiplicative constant) we can write

\begin{equation} \label{eq:cell}
\log C_\ell \approx \log L_\ell
\end{equation}
(we remind that $L_\ell-i_\ell$ is bounded by $3k-1$). 

By Lemma~\ref{mixtime} for every pair of $(i_\ell,j_\ell,L_\ell)$-admissible finite words $A,B$ there is a finite word $C$ of length $m\le 2L_\ell (L_\ell-i_\ell)$ such that $ACB$ is $(i_\ell,j_\ell,L_\ell)$-admissible.

Hence, for every $n\ge1$ the number $\theta(L_\ell\cdot(1+n(L_\ell-i_\ell+1)))$ of cylinders of length at most  $L_\ell\cdot(1+n(L_\ell-i_\ell+1))$ is bounded from below by
\[
	{C_\ell}^n\ge\left(\frac{L_\ell!}{i_\ell!(L_\ell-i_\ell)!}\right)^n.
\]
In particular,
\[
	h_\ell \ge
	\frac{n}{L_\ell\cdot(1+n(L_\ell-i_\ell+1))}\log C_\ell.
\]
Substituting \eqref{eq:cell} we get the lower bound.

Similarly, we can estimate the number of periodic orbits of period $n\,L_\ell$ from above by ${C_\ell}^n$, which gives the upper bound.
\end{proof}

The above enables us to further specify the choice of $(L_\ell)_{\ell=2}^k$ in such a way that for each $\ell=1,\ldots,k-1$ we have
\begin{equation}\label{eq:chataalunoa}
	h_{\ell+1} < h_\ell.
\end{equation}

\subsubsection{Cones}

Associated to the triples there are cones $\cC_\ell$ defined by
\[
	\cC_\ell\eqdef \big\{(t,p)\in\bR^2\colon
	 h_\ell-\alpha_\ell^-\cdot t\le p \le h_\ell-\alpha_\ell^+\cdot t \big\}.
\]
Observe their geometric meaning: the graph of the pressure function $t\mapsto P_{\sigma|H_\ell}(t\,\phi)$ lies inside $\cC_\ell$, see Section \ref{s:3.32}.

For $\ell=1,\ldots,k-1$ let
\begin{equation}\label{zezez}
	\tau_\ell^- \eqdef - \frac{h_\ell-h_{\ell+1}}
					{\lvert  \alpha_{\ell+1}^- -  \alpha_\ell^+\rvert},
	\quad
	\tau_\ell^+ \eqdef - \frac{h_\ell - h_{\ell+1}}
					{\lvert  \alpha_{\ell+1}^+-\alpha_\ell^-\rvert} .
\end{equation}

\begin{remark}\label{r:erwartung}
By the above choices, we have $h_\ell>h_{\ell+1}$ and $\alpha_\ell^+<\alpha_{\ell+1}^-$. By construction, two consecutive cones $\cC_\ell$ and $\cC_{\ell+1}$ intersect each other in a rectangle that projects to the first coordinate to the interval $[\tau_\ell^-,\tau_\ell^+]$ (compare Figure~\ref{fig.11}).
\end{remark}

\begin{figure}
\begin{minipage}[c]{\linewidth}
\centering
\vspace{0.5cm}
\begin{overpic}[scale=.45,
  ]{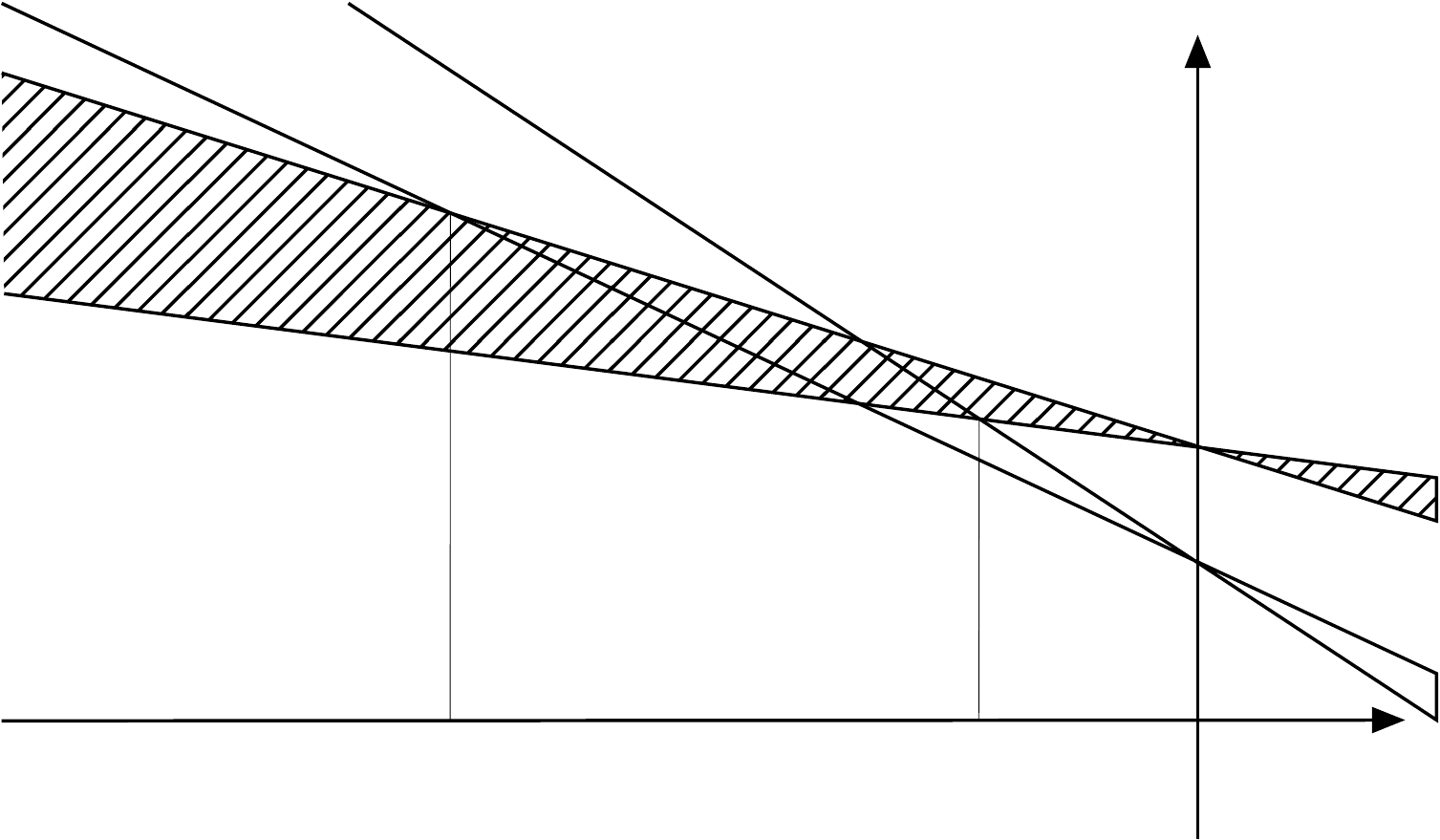}
    \put(98,2){\small$t$}	
    \put(85,30){\small$h_\ell$}	
    \put(102,22){\small$\cC_\ell$}	
    \put(102,8){\small$\cC_{\ell+1}$}	   	
    \put(35,2){\small$\tau_\ell^-$}	
    \put(68,2){\small$\tau_\ell^+$}	
  \end{overpic}
  \vspace{0.5cm}
\end{minipage}
\caption{Intersection of cones}
\label{fig.11}
\end{figure}

Given $\widetilde\beta\in(1,\beta_2)$ specified  in Section~\ref{sec:2.2}, let us define
\begin{equation}\label{e:important}
	t_0\eqdef \frac{\log 3 - \log 2}{\log\widetilde\beta - \log\beta_0}.
\end{equation}
This number corresponds to the point of intersection of the straight lines $(t,\log 2-t\log\beta_0)$ and $(t,\log3-t\log\widetilde\beta)$. This number will be  important in Section~\ref{s:PROOF}.

\subsubsection{Choice of the triples}

We now specify the choice of triples $\cT_k$.

\begin{lemma}\label{l:2}
	Given any $k\in \mathbb{N}$, there are triples
	$\cT_k=(i_\ell,j_\ell,L_\ell)_{\ell=1}^k$ satisfying conditions~\eqref{eq:choicess} in such a way that for every $\ell\in \{1,\ldots, k-1\}$
\begin{enumerate}
	\item $ h_{\ell+1}<h_\ell$,
	\item $\alpha_\ell^+<\alpha_{\ell+1}^-$, and
	\item $\tau_{\ell+1}^+<\tau_\ell^-<\tau_1^+<t_0$.
\end{enumerate}
\end{lemma}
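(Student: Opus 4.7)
The plan is to choose the sequence $(L_\ell)_{\ell=1}^k$ inductively: first fix $L_1$ to secure a lower bound on $h_1$, then choose $L_2\ll L_3\ll\cdots\ll L_k$, each $L_{\ell+1}$ sufficiently large relative to its predecessors. Conditions (1) and (2) will then follow from the asymptotics $h_{\ell+1}\to 0$ and $\alpha_{\ell+1}^\pm\to\log\beta_0$ as $L_{\ell+1}\to\infty$, while condition (3) rests on the observation about $h_1$ below.

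The crucial preliminary observation is that Proposition~\ref{prop:entropy} is proved only under the standing assumption $\ell>1$ (where $L_\ell-i_\ell$ is uniformly bounded by $3k-3\ell+5$) and does not control $h_1$. Since $i_1=0$, the sub-shift $H_1$ merely requires at least $3k+1$ zeros in every window of length $L_1$; fixing the residue classes $0,1,\ldots,3k$ modulo $L_1$ to carry the symbol $0$ and letting the remaining positions vary freely in $\{0,2\}$ defines a sub-shift of $H_1$ with topological entropy $((L_1-(3k+1))/L_1)\log 2$. Hence
\[
h_1 \;\ge\; \frac{L_1-(3k+1)}{L_1}\,\log 2 \;\longrightarrow\; \log 2 \qquad (L_1\to\infty).
\]
On the other hand, using $1<\widetilde\beta<\beta_2<\beta_0$, definition \eqref{e:important} yields
\[
|t_0|\,\log\tfrac{\beta_0}{\beta_2}\;=\;\log\tfrac{3}{2}\cdot\frac{\log(\beta_0/\beta_2)}{\log(\beta_0/\widetilde\beta)}\;<\;\log\tfrac{3}{2}\;<\;\log 2,
\]
so $L_1$ can be fixed, once and for all, large enough that $h_1>|t_0|\log(\beta_0/\beta_2)+\eta$ for some $\eta>0$.

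For the inductive step, given $L_1,\ldots,L_\ell$, I take $L_{\ell+1}$ large enough that: (a) $\alpha_\ell^+<\alpha_{\ell+1}^-$, which via \eqref{here} reduces to $(L_{\ell+1}-i_{\ell+1})/L_{\ell+1}<(L_\ell-j_\ell)/L_\ell$ and follows from \eqref{eq:choice}, giving (2); (b) $K_2\log L_{\ell+1}/L_{\ell+1}<h_\ell$, so Proposition~\ref{prop:entropy} gives $h_{\ell+1}<h_\ell$, i.e.\ (1). For (3), passing to $L_{\ell+1}\to\infty$ in \eqref{zezez} using $h_{\ell+1}\to 0$ and $\alpha_{\ell+1}^\pm\to\log\beta_0$ yields
\[
\tau_\ell^-\;\longrightarrow\;-\frac{h_\ell\,L_\ell}{(L_\ell-j_\ell)\log(\beta_0/\beta_2)},\qquad \tau_\ell^+\;\longrightarrow\;-\frac{h_\ell\,L_\ell}{(L_\ell-i_\ell)\log(\beta_0/\beta_2)}.
\]
At $\ell=1$, since $L_1-i_1=L_1$, the limit is $\tau_1^+\to -h_1/\log(\beta_0/\beta_2)$, which by the choice of $L_1$ is strictly less than $t_0$: this is (3c). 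For $\ell\ge 2$, $L_\ell-j_\ell=3k-3\ell+4$ and $L_\ell-i_\ell=3k-3\ell+5$ are bounded constants and $h_\ell\asymp\log L_\ell/L_\ell$ by Proposition~\ref{prop:entropy}, so $|\tau_\ell^\pm|\asymp\log L_\ell$. Enlarging $L_\ell$ further if necessary thus gives $|\tau_\ell^-|>|\tau_1^+|$ (namely (3b), since $|\tau_1^+|$ is a now-fixed positive constant) and $|\tau_{\ell+1}^+|>|\tau_\ell^-|$ (namely (3a), by direct comparison of the $\log L$ asymptotics).

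I expect the main obstacle to be the tension between (1), which pushes every $h_\ell$ to decrease, and (3c), which requires $h_1$ to be bounded below by $|t_0|\log(\beta_0/\beta_2)$. This would be irreconcilable if Proposition~\ref{prop:entropy} also constrained $h_1$, but that proposition is proved only for $\ell>1$, and $H_1$ genuinely has entropy close to $\log 2$ for large $L_1$---well above the required lower bound. Once this asymmetry is recognized, the remainder is just a bookkeeping of scales $L_1\ll L_2\ll\cdots\ll L_k$.
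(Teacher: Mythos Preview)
Your argument is correct and follows the same recursive scheme as the paper. You are in fact more careful than the paper on one point: the paper's asymptotic $|\tau_\ell^+|\approx\log L_\ell$ rests on Proposition~\ref{prop:entropy}, which is proved only for $\ell>1$, and the paper simply asserts $\tau_1^+<t_0$ for large $L_2$ without checking the required lower bound $h_1>|t_0|\log(\beta_0/\beta_2)$; your observation that $h_1\to\log 2$ as $L_1\to\infty$ (together with $|t_0|\log(\beta_0/\beta_2)<\log\tfrac32$) supplies precisely this missing step. One small terminological slip: the set obtained by fixing the residue classes $0,\ldots,3k$ modulo $L_1$ to carry the symbol $0$ is not $\sigma$-invariant and hence not literally a sub-shift, but a cylinder count (or passing to its orbit closure) still yields the entropy lower bound $h_1\ge\frac{L_1-(3k+1)}{L_1}\log 2$ that you use.
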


\begin{proof}
Recall that by the choice~\eqref{eq:choicess} for every $\ell$ we have $\alpha_\ell^+<\alpha_{\ell+1}^-$.
Note that if $L_{\ell+1}$ is sufficiently big then by Proposition~\ref{prop:entropy}
we have
\[
	h_{\ell+1} \ll h_\ell
\]
and by definition~\eqref{here} we have
\[
\beta_0-\alpha_{\ell+1}^+ \ll \beta_0 - \alpha_\ell^-.
\]
Hence, for $L_{\ell+1}$ big enough we have
\[
	\lvert\tau_\ell^+\rvert
	\approx \frac {h_\ell} {\beta_0-\alpha_\ell^-}
	\approx \frac {\log L_\ell/L_\ell} {1/L_\ell} = \log L_\ell.
\]
Hence, $\lvert\tau_\ell^+\rvert$ can be made arbitrarily large.

With the above we can proceed recursively. Observe that for $\ell=1$ we have $\tau_1^+<t_0$ if $L_2$ is large enough. For $\ell\ge1$ suppose that we have already constructed $(i_m,j_m,L_m)$ for every $m=1,\ldots,\ell$ satisfying Properties 1--3. By definition, $\tau_\ell^-<\tau_\ell^+$. If $L_{\ell+1}$ is large enough then $\tau_{\ell+1}^+<\tau_\ell^-$.
\end{proof}

\subsection{Properties of the sub-shifts}\label{heree}

By construction, the sets $H_\ell$ all are pairwise disjoint and $\sigma$-invariant. Observe that $i_1=0$ implies that $0^\bZ\subset H_1$. It turns out that $\bH=\bigcup H_\ell$. 

\begin{lemma}\label{l:hetero}
	If $\xi\in \bH\cap G_\ell$ then $\sigma^s(\xi)\notin G_n$ for all $n\neq\ell$  and all $s\ge0$.
\end{lemma}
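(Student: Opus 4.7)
The approach is to prove the stronger statement that if $\xi \in \bH \cap G_\ell$ then at every forward position $p \geq 0$ the level $\ell$ is admissible at $p$ (i.e.\ $\sigma^p\xi \in G_\ell$) and is the only admissible level. Write $z_m(p) := \#\{j \in \{0,\ldots,L_m-1\} : \xi_{p+j}=0\}$ for the number of zeros in the $L_m$-window at $p$, so that $\sigma^p\xi \in G_m$ iff $z_m(p) \in [L_m - j_m, L_m - i_m]$. Two elementary facts drive the argument: nesting --- $z_m(p) \geq z_{m'}(p)$ whenever $m \geq m'$ (since then $L_m \geq L_{m'}$) --- and sliding --- $|z_m(p+1) - z_m(p)| \leq 1$. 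From \eqref{eq:choicess}, the admissibility range is the two-element set $\{3k+4-3m, 3k+5-3m\}$ for $m \geq 2$ and the interval $[3k+1, L_1]$ for $m=1$.

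\emph{Step 1 (uniqueness at a fixed position).} Assume $z_\ell(p) \in [L_\ell - j_\ell, L_\ell - i_\ell]$. If $n > \ell$ then nesting gives $z_n(p) \geq L_\ell - j_\ell$, and a direct comparison from \eqref{eq:choicess} shows $L_\ell - j_\ell > L_n - i_n$ (whether $\ell = 1$ or $\ell \geq 2$), so $z_n(p) > L_n - i_n$ and $\sigma^p\xi \notin G_n$. The case $n < \ell$ is symmetric, using $z_n(p) \leq L_\ell - i_\ell < L_n - j_n$.

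\emph{Step 2 (forward stability).} Assume $\ell$ admissible at $p$. If it were not admissible at $p+1$, sliding would force $z_\ell(p+1) \in \{L_\ell - j_\ell - 1, L_\ell - i_\ell + 1\}$. Since $\sigma^{p+1}\xi \in \bH$, some level $m$ must be admissible at $p+1$. But for $m \leq \ell$ nesting gives $z_m(p+1) \leq z_\ell(p+1)$, while for $m > \ell$ it gives $z_m(p+1) \geq z_\ell(p+1)$; in either sub-case a direct comparison from \eqref{eq:choicess} --- analogous to Step 1 but shifted by one unit --- shows that $z_m(p+1)$ misses $[L_m - j_m, L_m - i_m]$, contradicting admissibility of $m$. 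Thus $\ell$ remains admissible at $p+1$; forward induction plus Step 1 applied at each $p \geq 0$ delivers the stronger claim, hence the lemma.

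The main obstacle is the arithmetic bookkeeping, which is tight: the recursion \eqref{eq:choicess} places the admissibility ranges of consecutive levels $\ell$ and $\ell+1$ with exactly one integer (namely $3k+3-3\ell$) strictly between them, and this buffer integer is precisely the value $z_\ell(p+1) = L_\ell - j_\ell - 1$ that would arise in Step 2 --- the key point being that it belongs to no admissibility range at all. The case $\ell=1$ (with its wider admissibility range $[3k+1, L_1]$ stemming from $i_1=0$) must be treated by a separate but entirely parallel computation.
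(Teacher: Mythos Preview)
Your proof is correct and follows essentially the same approach as the paper: both arguments count the number of zeros in windows of length $L_m$, exploit the containment of shorter windows in longer ones (your ``nesting''), and use the arithmetic of~\eqref{eq:choicess} to show that the admissibility ranges for different levels are separated by at least one integer. The paper's proof compresses your Steps~1 and~2 into a single inductive step (comparing the $L_\ell$-window at position~$0$ directly with the $L_m$-window at position~$1$), whereas you first isolate uniqueness at a fixed position and then prove forward stability; your organization is slightly cleaner but the combinatorics is identical.
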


\begin{proof}
	If $\xi\in \bH\cap G_\ell$ then in the word $\xi_0\ldots \xi_{L_\ell-1}$ the symbol 0 appears either at least $3(k-\ell) +4$ times (if $\ell=1$) or precisely $3(k-\ell)+4$ or $3(k-\ell) +5$ times (if $\ell >1$). We know that $\sigma \xi\in G_m$ for some $m$.

If $m<\ell$ then it means that in the word $\xi_1\ldots \xi_{L_m}$ the symbol 0 appears at least $3(k-m)+4>3(k-\ell) +5$ times, which is impossible because this word is a sub-word of $\xi_0\ldots \xi_{L_\ell-1}$. If $m>\ell$ then it means that in the word $\xi_1\ldots \xi_{L_m}$ the symbol 0 appears no more than $3(k-m)+5\leq 3(k-\ell)+2$, which is impossible because this word contains all the symbols except the first one from $\xi_0\ldots \xi_{L_\ell-1}$ and the latter word has at least $3(k-\ell)+4$ 0's. Hence, $m=\ell$ and we proceed by induction. 
\end{proof}

Hence,
\[
H_\ell=\bH\cap G_\ell.
\]

\subsection{Proof of Proposition~\ref{p:main}}\label{s:3.32}

Given $\xi\in\Sigma_{012}$, consider the \emph{lower} and \emph{upper Birkhoff averages} of $t\,\phi$ at $\xi$ with respect to $\sigma$ given by
\[
		\underline\chi(\xi)\eqdef
		\liminf_{n\to\infty}\frac t n \sum_{k=0}^{n-1}\phi(\sigma^k(\xi)),\quad
		\overline\chi(\xi)\eqdef
		\limsup_{n\to\infty}\frac t n \sum_{k=0}^{n-1}\phi(\sigma^k(\xi)).
\]	
The definition of $H_\ell$ implies that for every $\ell\ge1$ and every $\xi\in H_\ell$ we have
\begin{equation}\label{formula}
	\underline\chi(\xi),\overline\chi(\xi)\in \big[\alpha^-_\ell,\alpha^+_\ell\big].
\end{equation}

Consider the set $\bH$ defined in~\eqref{def:Hk}. We will study the topological pressures of the potential $t\,\phi$ with respect to the shift maps $\sigma\colon H_\ell\to H_\ell$ and $\sigma\colon \bH\to \bH$. Observe that  for every $t\in\bR$ we have
\[
	\bP(t)= P_{\sigma|\bH}(t\,\phi)
	=\max_{\ell=1,\ldots,k}P_{\sigma|H_\ell}(t\,\phi).	
\]
To locate the graph of the pressure function $\bP(t)$ note that
\begin{equation}\label{ref:entropie}
	P_{\sigma|H_\ell}(0)=h_\ell
\end{equation}	
and that any sub-gradient of the pressure function is in $[\alpha^-_\ell,\alpha^+_\ell]$. Thus, using the cones defined above, we get that
\begin{equation}\label{ea:before}
	(t, P_{\sigma|H_\ell}(t\,\phi))\in\cC_\ell \quad\text{ for every }t\in\bR.
\end{equation}

Note that for every $t\in\bR$ the  potential $t\,\phi$ is H\"older continuous. By Corollary~\ref{mixing}, for each $\ell$ the sub-shift $\sigma\colon H_\ell\to H_\ell$ is uniformly expanding and topologically mixing, thus the function $t\mapsto P_{\sigma|H_\ell}(t\,\phi)$ is real analytic~\cite{Rue:78}. In particular, by the above, derivatives  are in the interval $[\alpha_\ell^-,\alpha_\ell^+]$.

Note that $\bP$ is convex and thus differentiable for all but at most countably many $t$, and the left and right derivative $D^-\bP(t)$ and $D^+\bP(t)$ are defined for all $t$. Moreover, $\bP$ is differentiable at $t$ if, and only if, all equilibrium measures for $t\,\phi$ have the same average $-\bP'(t)$ (refer to~\cite[Section 9.5]{Wal:81} for more details).

Write $P_\ell(t)= P_{\sigma|H_\ell}(t\,\phi)$.
By the remarks above, we have $P'_\ell(t)\in[\alpha_\ell^-,\alpha_\ell^+]$. Since the intervals $[\alpha_\ell^-,\alpha_\ell^+]$ are pairwise disjoint, the graphs of $P_\ell$ and $P_{\ell+1}$ intersect in exactly one point $t_\ell$.

The choice of the cones implies that $\bP(t)=P_\ell(t)$ for $t\in [t_\ell,t_{\ell-1}]$ and the choice of $\tau_\ell^\pm$ implies that $t_\ell\in[\tau_\ell^-,\tau_\ell^+]$. Hence, the pressure changes from the cone  $\cC_\ell$ to $\cC_{\ell+1}$. Since in each cone the slope of the pressure is in the interval given by the ``opening"  of the cone and these ``openings" are disjoint this implies that at $t_\ell$ the pressure is not differentiable.

Fix $\ell$ and let us focus on the sub-dynamics of $\sigma|_{H_\ell}$. First, let us obtain an equilibrium state with the claimed properties. Let $\mu_\ell^+$ be an weak$\ast$ accumulation point of $\mu_t$ as $t\searrow t_\ell$, where $\mu_t$ is the (unique) equilibrium state for $t\,\phi$ with respect to $\sigma|_{H_\ell}$. As above, we observe $t\int\phi\,d\mu_t=-P_\ell'(t)$.  In particular, we hence have
\[
	t\int\phi\,d\mu_\ell^+=\lim_{t\searrow t_\ell}t\int\phi\,d\mu_t
	=\lim_{t\searrow t_\ell}-P_\ell'(t)=-D^+\bP(t_\ell).
\]	
Upper semi-continuity of the entropy function implies that $\mu_\ell^+$ is an equilibrium state for $t\,\phi$ with respect to $\sigma|_{H_\ell}$. By a standard argument we can assume that $\mu_\ell^+$ is ergodic. Thus, typical Birkhoff averages are equal to $-D^+\bP(t_\ell)$, proving the first property.
 Observe that for every $t\in\bR$ we  have
\[
	h(\mu_t)=P_\ell(t)+tP_\ell'(t)
\]
or, in other words, $h(\mu_t)$ is the intersection of the tangent line to the pressure $P_\ell$ at $t$ with the vertical axis.
Assume, by contradiction, that $h(\mu_\ell^+)=0$.
As $P_\ell$ is strictly convex, this would imply that for $t<t_\ell$ the entropy $h(\mu_t)$ would be negative, which is a contradiction.

Analogous arguments apply to $\mu_\ell^-$.

Finally, Lemma~\ref{l:2} item 3 implies that $t_1\le\tau_1^-<t_0$, ending the proof of the proposition. \hfill$\square$

\section{Smooth realization}\label{sec:2.2}

We now define the diffeomorphism $F$.
We consider maps $\{f_0,\widetilde f_0,f_1,f_2,\widetilde f_2\}$ as in the Figure~\ref{fig.1}, see precise definition in Section~\ref{sec:1d}.
It will be sufficient to define those maps in some  neighborhood of $[0,1]$.
\begin{figure}
\begin{minipage}[h]{\linewidth}
\centering
\vspace{0.5cm}
\begin{overpic}[scale=.40,
  ]{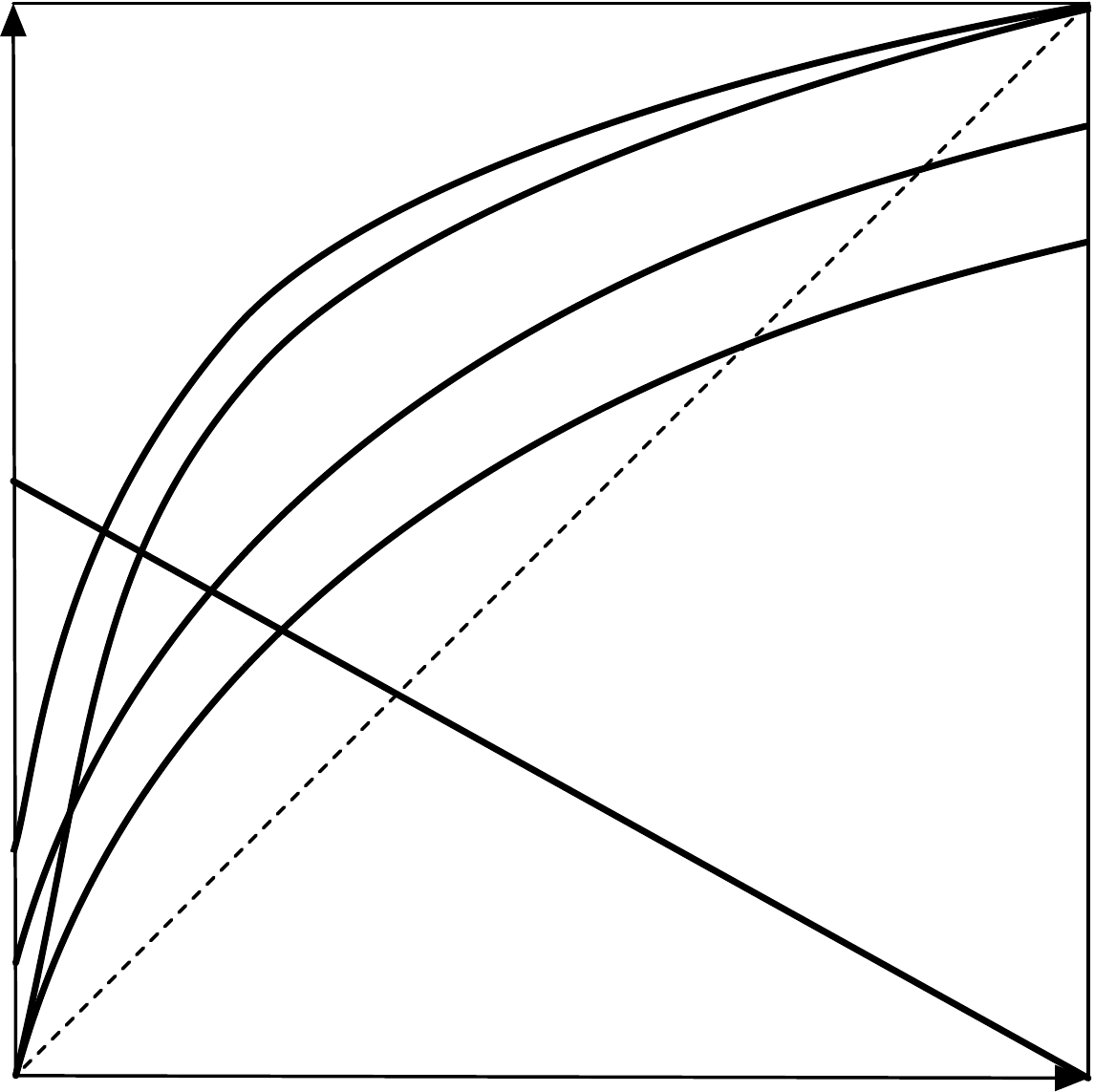}
     	\put(102,75){\small$f_2$}
     	\put(102,97){\small$f_0$}
    	\put(-10,25){\small$\widetilde f_2$}
     	\put(-10,10){\small$\widetilde f_0$}
	\put(70,25){$f_1$}
  \end{overpic}
\end{minipage}
\caption{The fiber maps $f_0,\widetilde f_0,f_1,f_2$, and $\widetilde f_2$.}
\label{fig.1}
\end{figure}

Fix $k\ge1$, the triples $(i_\ell, j_\ell,L_\ell)_{\ell=1}^k$, and the sets $(G_\ell)_{\ell=1}^k$ as in Section~\ref{s:22}.
Each cylinder $[\xi_0\ldots\xi_\ell]\in\{0,2\}^{\ell+1}$ associates a sub-cube $\widehat\CC_{[\xi_0\ldots\,\xi_\ell]}$ of $\widehat\CC=[0,1]^2$ defined as
the connected component of $\Phi^{-\ell}(\widehat\CC)\cap \widehat\CC$ containing $\varpi^{-1}([\xi_0\ldots\xi_\ell])$, where $\varpi$ is the conjugation map defined in Section~\ref{sec:1}. To produce a simple example, we will assume that $\Phi$ is affine in $\widehat\CC_{[i]}$.
To write the map $F$ in a compact way, write $\widetilde f_1=f_1$ and define $F$ as in~\eqref{e.defF} by
\[
	F(\widehat x,x)\eqdef (\Phi(\widehat x),f_{\varpi(\widehat x)}(x)),
\]
where the fibre maps are given by
\begin{equation}\label{def:almos}
   f_\xi( x)\eqdef
    \begin{cases}
        f_{\xi_0}(x)& \text{ if }
        \varpi( x)\in
        \bigcup_{\ell=1}^k\,  \, \bigcup_{\xi\in G_\ell} [\xi_0\ldots\xi_{L_\ell}],
        \\[0.1cm]
	\widetilde f_{\xi_0}(x)& \text{ otherwise}.
    \end{cases}
\end{equation}
Consider the sub-cubes $\CC_{[\xi_0\ldots\,\xi_m]}\eqdef \widehat \CC_{[\xi_0\ldots\,\xi_m]}\times [0,1]$. To complete the definition of $F$ in $\CC$ we take some appropriate $C^1$-continuation $F$ such that
$$
	F\Big(\text{int}\Big(\CC\setminus
	\bigcup_{\ell=1}^k\,\,\,\bigcup_{\xi=\varpi(\widehat x)\in G_\ell}
	\CC_{[\xi_0\ldots\,\xi_{L_\ell}]}\Big) \Big)\cap \CC=\emptyset.
$$
Note that the skew product $F$ is constant on each such sub-cube  $\CC_{[\xi_0\ldots\,\xi_{L_\ell}]}$. As each fibre map of the skew product is determined by at most $L_k$ symbols of the symbolic representation of $\widehat x$, the map $F$ is  an $L_k$-step skew product.
\begin{figure}
\begin{minipage}[c]{\linewidth}
\centering
\begin{overpic}[scale=.45]{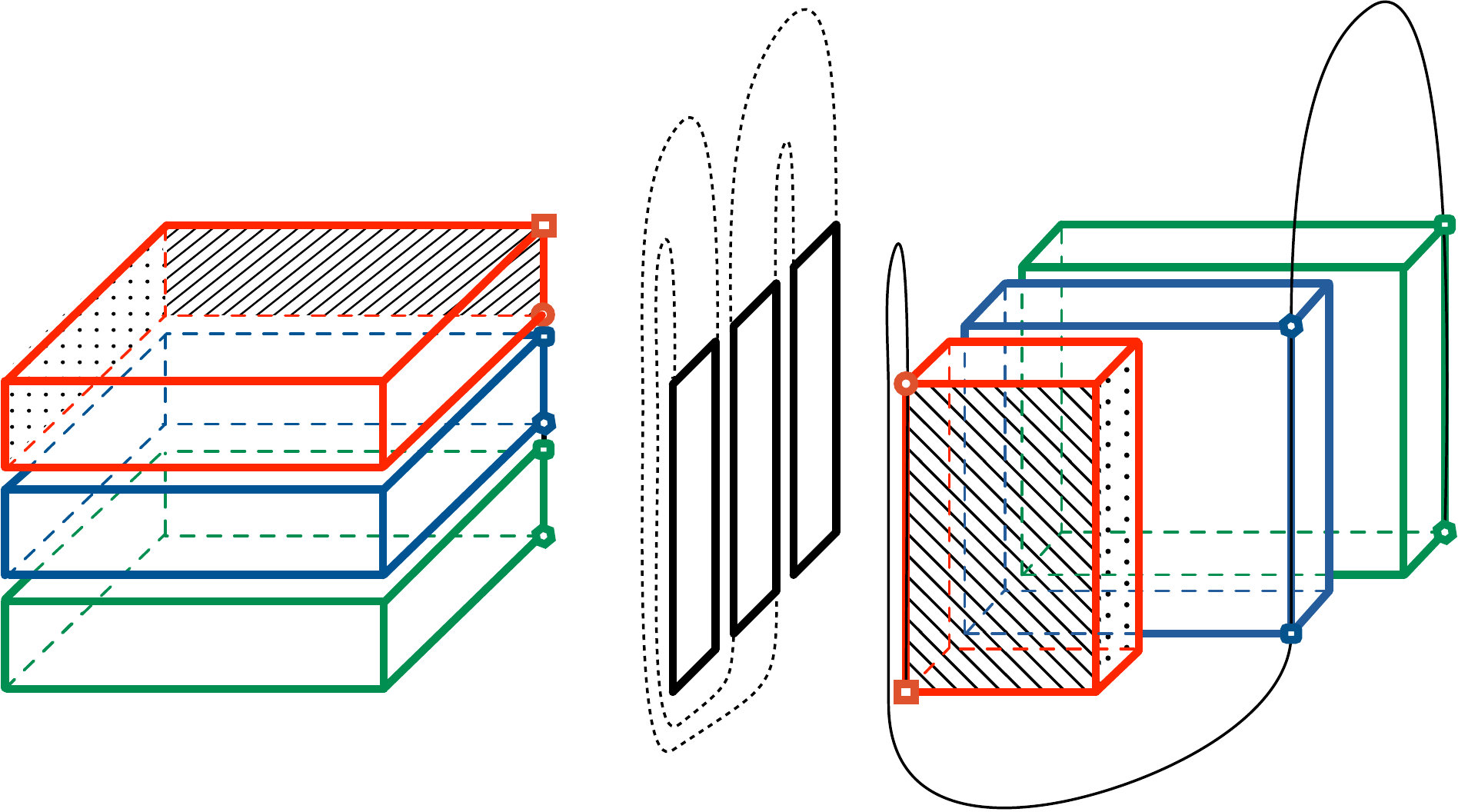}
  	\put(-7,17){$\CC_2$}	
  	\put(-7,9){$\CC_0$}	
  	\put(-7,25){$\CC_1$}	
 \end{overpic}
\caption{Construction of level 1 rectangles $\CC_{\xi_0}$ associated to cylinders $[\xi_0]$, $\xi_0=0,1,2$}
\label{fi.neu}
\end{minipage}
\end{figure}
Figure~\ref{fi.neu} illustrates roughly the construction of \emph{level-1} rectangles.

Recall the definition of $\bH \subset \Sigma_{02}$  in~\eqref{def:Hk}
and consider the lateral set
\begin{equation}\label{defLLLL}
	\Lambda_\bH
	\eqdef \Lambda\cap \bR^2\times\{0\}
	= \big\{ (\varpi^{-1}(\xi),0)\colon \xi\in\bH\big\}
\end{equation}
that contains $k$ disjoint proper horseshoes
$	\Lambda_\ell=\varpi^{-1}(H_\ell)$,
$\ell=1,\ldots,k$ mentioned in the introduction, see Figure~\ref{fig.sketch}. See also Section~\ref{ss.transitivelambda}.

\subsection{The underlying iterated function system}\label{sec:1d}

We start with the following conditions.

\begin{itemize}
\item[{(F0)}]
    The map $f_0$ is increasing and has exactly two hyperbolic fixed
    points in $[0,1]$, the point $q_0=0$ (repelling) and the point $p_0=1$ (attracting).
    Let $\beta_0=f_0^\prime (0)>1$ and $\lambda_0=f_0^\prime (1)\in
    (0,1)$.     \\[-0.3cm]
\item[{(F1)}]
    The map $f_1$ is an affine contraction $f_1(x)\eqdef \gamma\,(1-x)$ where $\gamma\in (\lambda_0,1)$. We denote by $p_1$ the attracting fixed point of $f_1$.
    Note that $f_1(1)=0$ (cycle condition).\\[-0.3cm]
\item[{(F2)}]
    The map $f_2$ is increasing and has two hyperbolic fixed
    points in $[0,1]$, the point $q_2=0$ (repelling) and the point $p_2\in(0,1)$ (attracting). Let $\beta_2=f_2^\prime (0)>1$.
\end{itemize}

Before going into further details, let us explain some heuristic ingredients.

A key point is the existence of a fundamental domain $J=[f_0^{-1}(b),b]\in(0,1)$, $b$ close to $0$, such that $f_0^N(J)$, for some large $N$, is close to $1$ and $f_1\circ f_0^N(J)\subset (0,b)$ and the restriction of $f_1\circ f_0^N$ to $J$ is uniformly expanding. This is the essential point for defining expanding itineraries (see Section~\ref{ss.1d}) and hence proving transitivity of $\Lambda$. This is guaranteed by the following.

\begin{itemize}
\item[{(F01)}]
	The derivative $f_0'$ is decreasing in $[0,1]$, in particular, $\lambda_0\le f_0^\prime(x)$ for all $x \in [0,1]$, and satisfies
	\[
	\gamma\, \lambda_0^3\, (1-\lambda_0)(1-\beta_0^{-1} )^{-1}>1.
	\]
\end{itemize}
Note that given $\gamma, \lambda_0\in(0,1)$, condition (F01) is clearly satisfied if $\beta_0>1$ is sufficiently close to $1$.

The maps $\widetilde f_0,\widetilde f_2$ are chosen as follows.
\begin{itemize}
\item[{(\~F02)}] The maps  $\widetilde f_0,\widetilde f_2$ are close to $f_0,f_2$ and satisfy $\widetilde f_0(0)>0$, $\widetilde f_0(1)=1$, and $\widetilde f_2(0)>0$. Moreover, for simplicity, we take $\widetilde f_0=f_0$ in $[b,1]$.
\end{itemize}

Finally, a key point is the existence of a gap in the spectrum (see Section~\ref{sec:spgap}). For simplicity, let us assume that $\log\beta_2$ is close to but smaller than $\log\beta_0$. Due to monotonicity of the maps $f_0,f_2$ and its derivatives, the only way that a point $x\in\Lambda\setminus\Lambda_\bH$ may have central Lyapunov exponent close to $\log\beta_2$ is when its orbit stays close to $\Lambda_\bH$ for a long time. But due to the cycle configuration, such an orbit previously stayed close to $1$ for a long time. The latter compensates the expansion and as a result the central Lyapunov exponent is smaller than some number $\log\widetilde\beta<\log\beta_2$ (see definition~\eqref{def:EE}).

The choice of $f_2,\widetilde f_2,\widetilde f_0$ is such that they are close to $f_0$ in such a way that the expansion they introduce do not destroy the spectral gap $(\log\widetilde\beta,\log\beta_2)$. The precise technical condition is literally (\emph{mutatis mutandi}) the same as (F012) in~\cite{DiaGelRam:11}, however stating the condition not only for $f_0,f_2$ but for $f_0,\widetilde f_0$ and $f_2,\widetilde f_2$, respectively%
\footnote{
\textbf{(F012)}
	We have $f_0^\prime(x),\widetilde f_0^\prime(x), f_2'(x),\widetilde f_2^\prime(x)\le\beta_0$ for all $x \in [0,1]$.
	The interval $H=[0,b]$, $b$ as above, is such that with $H'=f_1^{-1}(H)$ the  intersections
	\[
		f_1(H)\cap H ,
		\,\, f_1(H')\cap H',
		\,\, f_1([0,1])\cap H', 
		\,\, f_2([0,1])\cap H',
		\,\, \widetilde f_2([0,1])\cap H'
	\]
    are all empty. Moreover, we assume that
	\begin{gather*}	
	\beta' \eqdef \max\{f_0'(x),\widetilde f_0'(x), f_2'(x), \widetilde f_2'(x)
		\colon x\notin H\}
		<\beta_2,\\
		\beta_H\eqdef
		\min\{f_0'(x),\widetilde f_0'(x), f_2'(x),\widetilde f_2'(x)
			\colon x\in H\}<\beta_2,\\
		\lambda' \eqdef \max\{f_0'(x),\widetilde f_0'(x) \colon x\in H'\}<1,\\
    	\lvert\log\lambda_0\rvert\frac{\log\beta_0}{\log\beta_H}
	-\lvert\log\lambda'\rvert
	+\frac 2 3\log\beta_0<\frac 3 4  \log\beta'.
\end{gather*}
    Finally, let us also assume that with $L\ge1$ satisfying
    \[
    	L> 4\,(\lvert\log\lambda_0\rvert\,\log\beta_0)(\log\beta_H\,\log\beta')^{-1}
   \quad \text{ we have \quad}	
    \log((\beta_0)^L\,\gamma)(L+1)^{-1} < \log\beta'.
    \]
}.

\subsubsection{Expanding properties of the iterated function system}\label{ss.1d}

First, we provide more details on the IFS.
The maps $f_0, f_1$ are chosen (see \cite[Remark 7.2]{DiaGelRam:11}) such that there are numbers $\kappa>1$, $b\in (0,1)$ close to $0$ as above, and $N\in \NN$, and  such that every interval $J\subset [f_0^{-2}(b),b]$  has associated a natural number $n(J)\le N$ satisfying the following:
\[
	(f_1\circ f_0^{n(J)})(x) \subset (0,b]
	\quad\text{and}\quad
	\lvert(f_1\circ f_0^{n(J)})'(x)\rvert \ge \kappa, \quad
	\text{ for every }x\in J.
\]	
Let $m(J)\ge 0$ be the smallest number such that
\[
	(f_0^{m(J)}\circ f_1\circ f_0^{n(J)})(J) \cap (f_0^{-1}(b),b]\ne \emptyset.
\]
Noting that $f_0^{n(J)}(J) \subset [f_0^{N-2}(b),f_0^N(b)]$ we have that
\begin{equation}\label{e.itinerary}
	(f_1\circ f_0^{n(J)})(J) \subset [f_1\circ f_0^N (b), b].
\end{equation}
As $b$ is close to $0$ we have $f_0'(x)>1$ (recall (F0)). Thus
\[
	\lvert (f_0^{m(J)} \circ f_1\circ f_0^{n(J)})'(x)\rvert \ge \kappa,
	\quad \text{ for every } x\in J.
\]
In this way for  $J\subset [f_0^{-2}(b),b]$ we get its \emph{expanding itinerary} $(0^{n(J)}\,1\,0^{m(J)})$ and define its \emph{expanded successor} by
\[
	(f_0^{m(J)}\circ f_1 \circ f_0^{n(J)}) (J).
\]
Arguing inductively, one defines intervals $J_j\subset[f_0^{-2}(b),b]$ such that $J_0=J$ and $J_{j+1}$ is the expanded successor of $J_j$.
As we have
\[
	\lvert J_{j+1}\rvert
	\ge \kappa \,\lvert J_j\rvert
	\ge \kappa^{j+1}\, \lvert J_0\rvert,
\]
this provides a smallest number $\ell=\ell(J)$ such that $J_{\ell+1}$ is not contained in $[f_0^{-2}(b),b]$ and thus contains $[f_0^{-2}(b),f_0^{-1}(b)]$.
For the IFS associated to $\{f_0,f_1\}$ we use the standard cylinder notation: given a finite sequence $(\xi_0\ldots \xi_k)$ let
\[
	f_{[(\xi_0\ldots \xi_k)]}\eqdef f_{\xi_k}\circ\cdots\circ f_{\xi_0}.
\]
With this notation, the above construction is summarized as follows.

\begin{lemma}\label{l.xJ}
Associated to every interval $J\subset [f_0^{-2}(b), b]$ there is a finite sequence $\xi(J)=(\xi_0\ldots \xi_{k(\ell)})= (0^{n_0}\,1\, 0^{m_0}\ldots 0^{n_\ell}\,1\, 0^{m_\ell})$
such that
\begin{enumerate}
	\item $f_{[\xi(J)]}$ is uniformly expanding in $J$,
	\item $f_{[\xi(J)]}(J)$ contains $[f_0^{-2}(b),f_0^{-1}(b)]$.
\end{enumerate}
\end{lemma}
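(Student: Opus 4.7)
My plan is to iterate the one-step construction described in the paragraph immediately preceding the lemma, and to stop the first time the image interval escapes $[f_0^{-2}(b), b]$. Concretely, starting from $J_0 \eqdef J$, and as long as $J_j \subset [f_0^{-2}(b), b]$, I invoke that construction to produce integers $n_j \eqdef n(J_j) \le N$ and $m_j \eqdef m(J_j) \ge 0$, the finite word $w_j \eqdef (0^{n_j}\,1\,0^{m_j})$, and the map $g_j \eqdef f_0^{m_j} \circ f_1 \circ f_0^{n_j} = f_{[w_j]}$. I then set $J_{j+1} \eqdef g_j(J_j)$ and continue. The candidate sequence for the lemma will be $\xi(J) \eqdef w_0 w_1 \cdots w_\ell$, where $\ell$ is the first index for which $J_{\ell+1}$ is not contained in $[f_0^{-2}(b), b]$.

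To prove that $\ell$ is finite and to establish assertion (1), I would use the uniform bound $\lvert g_j'(x)\rvert \ge \kappa > 1$ on $J_j$ already justified in the excerpt. This yields $\lvert J_{j+1}\rvert \ge \kappa \,\lvert J_j\rvert$, hence $\lvert J_j\rvert \ge \kappa^j\,\lvert J_0\rvert$ for every index reached in the iteration. Since any $J_j \subset [f_0^{-2}(b), b]$ has length bounded by $\lvert[f_0^{-2}(b), b]\rvert$, the iteration must terminate at some smallest $\ell \ge 0$. By the chain rule, the derivative of $f_{[\xi(J)]} = g_\ell \circ \cdots \circ g_0$ on $J$ is bounded below in absolute value by $\kappa^{\ell+1} > 1$, giving assertion (1).

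For assertion (2), I would argue that $J_{\ell+1} = f_{[\xi(J)]}(J)$ must exit $[f_0^{-2}(b), b]$ from below. By the choice of $n_\ell$ we have $(f_1 \circ f_0^{n_\ell})(J_\ell) \subset (0, b]$; iterating $f_0$ and using the minimality of $m_\ell$ in the cases $m_\ell = 0$ and $m_\ell \ge 1$, one checks that $J_{\ell+1} \subset (0, b]$ and that $J_{\ell+1}$ meets $(f_0^{-1}(b), b]$. Hence $J_{\ell+1}$ cannot contain any point above $b$, so the failure $J_{\ell+1} \not\subset [f_0^{-2}(b), b]$ forces $J_{\ell+1}$ to extend strictly below $f_0^{-2}(b)$. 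Being an interval that contains points both below $f_0^{-2}(b)$ and above $f_0^{-1}(b)$, it must contain the subinterval $[f_0^{-2}(b), f_0^{-1}(b)]$, which is assertion (2).

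I expect the only subtle point is justifying that the uniform expansion factor $\kappa$ applies at every iterated step; this is precisely the content of the inequality $\lvert(f_0^{m(J)} \circ f_1 \circ f_0^{n(J)})'(x)\rvert \ge \kappa$ stated in the excerpt, which in turn rests on (F0) and (F01). Once that is in hand, the rest is a straightforward inductive bookkeeping of cylinder words.
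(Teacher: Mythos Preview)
Your proposal is correct and follows exactly the iterative construction the paper presents in the paragraphs immediately preceding the lemma; indeed the paper treats Lemma~\ref{l.xJ} as a summary of that construction rather than giving a separate proof. Your additional justification for assertion (2)---checking via the minimality of $m_\ell$ that $J_{\ell+1}\subset(0,b]$ yet meets $(f_0^{-1}(b),b]$, so it must escape below $f_0^{-2}(b)$---spells out a step the paper only asserts with the phrase ``and thus contains $[f_0^{-2}(b),f_0^{-1}(b)]$''.
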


Applying the above, we obtain the following (see also \cite[Lemma 3.8]{DiaGel:12}).

\begin{corollary}\label{c.qstar}
	Given the interval $D=[f_0^{-2}(b), f_0^{-1}(b)]$ and its
	expanding sequence $\xi(D)$, there is a unique expanding fixed point $q_\ast\in D$ of $f_{[\xi(D)]}$.
	Moreover, the unstable manifold $W^u(q_\ast,f_{[\xi(D)]})$ contains  $D$.
\end{corollary}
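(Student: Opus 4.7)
The plan is to apply Lemma~\ref{l.xJ} directly with $J=D$, which is legitimate because $D=[f_0^{-2}(b),f_0^{-1}(b)]\subset[f_0^{-2}(b),b]$. Writing $g\eqdef f_{[\xi(D)]}$, this immediately yields that $g$ is uniformly expanding on $D$ with $\lvert g'\rvert\ge\kappa>1$, and that $g(D)\supset D$. The rest of the statement is essentially a one-dimensional exercise based on these two facts.

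For existence of a fixed point in $D$, I would invoke the intermediate value theorem: since $g$ is continuous and $g(D)$ covers $D$, there exist $x_-,x_+\in D$ whose images under $g$ are the two endpoints of $D$, so the continuous function $x\mapsto g(x)-x$ changes sign on $D$ and must vanish at some $q_\ast\in D$. For uniqueness, I would observe that $g$ is a composition of strictly monotone interval maps ($f_0$ is increasing, $f_1$ is affine), hence itself strictly monotone on $D$. If $g|_D$ is increasing, the mean value inequality $\lvert g(x)-g(y)\rvert\ge\kappa\lvert x-y\rvert$ forbids a second fixed point; if $g|_D$ is decreasing, then $x\mapsto g(x)-x$ is strictly decreasing and hence has at most one zero. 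Either way $q_\ast$ is unique, and expanding since $\lvert g'(q_\ast)\rvert\ge\kappa>1$.

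For the unstable manifold claim, I would show that every $x\in D$ admits a backward $g$-orbit converging to $q_\ast$. Since $g$ is strictly monotone on $D$ with $g(D)\supset D$, there is a well-defined inverse branch $h\eqdef g^{-1}|_D$ sending $D$ into $D$, and the expansion bound on $g$ translates into a contraction bound $\lvert h'\rvert\le\kappa^{-1}<1$ on $D$. Thus $h$ is a strict contraction of the compact interval $D$ with unique fixed point $q_\ast$, and for each $x\in D$ the sequence $(h^n(x))_n$ is a genuine backward $g$-orbit of $x$ converging to $q_\ast$. This places $x$ in $W^u(q_\ast,g)$, and hence $D\subset W^u(q_\ast,f_{[\xi(D)]})$. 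The main point to verify with a little care is that $h$ really does map all of $D$ into $D$ (and not merely into some ambient domain of $g^{-1}$); this is immediate from $g(D)\supset D$ together with the monotonicity of $g$ on $D$, and is the only place where the hypothesis $g(D)\supset D$ from Lemma~\ref{l.xJ}(2) is used beyond the existence step.
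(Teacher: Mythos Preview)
Your proof is correct and follows exactly the approach the paper intends: the paper does not give an explicit proof but simply says ``Applying the above, we obtain the following (see also~\cite[Lemma 3.8]{DiaGel:12})'', treating the corollary as immediate from Lemma~\ref{l.xJ}. Your argument supplies precisely the standard one-dimensional details behind this: from $g(D)\supset D$ and $\lvert g'\rvert\ge\kappa>1$ on $D$ you extract the fixed point via the intermediate value theorem, uniqueness from the expansion estimate, and the unstable-manifold inclusion from the contracting inverse branch $h\colon D\to D$. One minor remark: the case split for uniqueness is unnecessary, since the mean value inequality $\lvert g(q_1)-g(q_2)\rvert\ge\kappa\lvert q_1-q_2\rvert$ already rules out two fixed points regardless of the sign of $g'$.
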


For our purposes, a key property of the previous construction is that it \emph{only} involves expanding itineraries of intervals whose orbits are contained in $[f_1\circ f_0^{N} (b), f_0^N(b)]$ (this follows from \eqref{e.itinerary}) and this interval
is  disjoint from $\{0\}$. Thus, it continues to hold after replacing the map $f_0$ by any map $\widetilde{f}_0$ which coincides with $f_0$
in that interval (compare condition (\~F02)).

\subsubsection{Principal spectral gap}\label{sec:spgap}

Let us now provide details on the spectral gap. First, given a sequence $\xi\in\{0,1,2\}^\bZ$, for $n\ge0$ let
\[
	f_\xi^n\eqdef f_{\sigma^{n-1}(\xi)}\circ\cdots\circ f_{\xi},
\]
where $f_\xi$ is as in~\eqref{def:almos}. Given $x\in[0,1]$ and $\xi\in\Sigma_{012}$, the \emph{upper (forward) Lyapunov exponent} of $x$ with respect to $\xi$ is defined by
\[
	\overline\chi(\xi,x)\eqdef \limsup_{n\to\infty}\frac 1 n \log\,\lvert (f_\xi^n)'(x)\rvert.
\]
The lower exponent $\underline\chi$ is defined analogously taking $\liminf$ instead of $\limsup$.

Similarly as in~\cite{DiaGelRam:11}, we consider the set $\cE$ of `exceptional points' defined by
\begin{multline}\label{exceexce}
	\cE\eqdef \big\{(\xi,0)\colon\xi\in\bH\big\}\cup\\
	\big\{\big((0^{-\bN}.0^k\,1\,\xi^+),1\big),\big((0^{-\bN}\,1\,\xi_k.\xi^+),0\big)
		\colon k\ge 0,\xi_k\in\{0,2\}^k, \xi^+ 
		\big\}
\end{multline}
where in the second set $\xi^+$ is any one-sided sequence such that there is some $\xi^-$ with $\xi=(\xi^-.\xi^+)\in\bH$. That is, the set $\cE$ codes all points in the ``full lateral horseshoe'' $\{(\xi,0)\colon\xi\in\bH\}$ together with its stable manifold. An interpretation of $\cE$ in terms of heterodimensional cycles will be given below.

\begin{remark}[Gaps in the central spectrum]\label{remremrem}{\rm
	For pairs in the exceptional set $\cE$ the exponents naturally are in $[\log\beta_2,\log\beta_0]$, and are therefore beyond the gap.
However, by our hypotheses, as for~\cite[Proposition 4.2]{DiaGelRam:11} we obtain the following gap in the spectrum of Lyapunov exponents for non-exceptional points
\begin{equation}\label{def:EE}
	\exp\Big(\sup\big\{\overline\chi(\xi,x)\colon(\xi,x)\notin\cE\big\}\Big)
	\eqdef\widetilde\beta
	<\beta_2.
\end{equation}
Note that the choice of $H_\ell$ implies that
\[
	\big\{\underline\chi(\xi,0),\overline\chi(\xi,0)
		\colon \xi\in H_\ell\big\}
	=[\alpha_\ell^-,\alpha_\ell^+].
\]
Therefore, by the definition of $\Lambda_\ell$ and by Lemma~\ref{l:2}, besides the initial gap $(\log\widetilde\beta,\log\beta_2)$ there are $k-1$ further gaps $(\alpha_k^+,\alpha_{k-1}^-),\ldots,(\alpha_2^-,\alpha_1^+)$.
}\end{remark}

\subsection{Dynamics of the skew product diffeomorphism}\label{ss.transitivelambda}

In this section we derive some topological properties of the set $\Lambda$ and prove that this set is transitive. The transitivity will follow adapting methods in \cite{DiaGelRam:11}. Let us also observe that other topological properties of $\Lambda$ such as the abundance of non-trivial spines and the density of contracting (and also expanding) hyperbolic periodic points can be obtained following~\cite{DiaGel:12,DiaGelRam:}.

We start with a dynamical interpretation of the exceptional set~\eqref{exceexce}.

The orbit of any point $(\widehat x,0)\in \Lambda$ with $\varpi(\widehat x)\in \bH$ stays in $[0,1]^2\times\{0\}$.
Thus, clearly
\[
	\Lambda_\bH\subset\Lambda\cap ([0,1]^2\times\{0\})
\]	
Indeed $\Lambda_\bH$ is a proper invariant subset of  $\Lambda\cap ([0,1]^2\times\{0\})$, as we will see in the next paragraph.
Moreover, it is straightforward to see that
\[
	W^s(\Lambda_\ell,F)\cap W^u(P,F)\ne\emptyset
	\quad\text{ and }\quad
	W^u(\Lambda_\ell,F)\cap W^s(P,F)\ne\emptyset,
\]
$\ell=1,\ldots,k$, where $\Lambda_\ell=\varpi^{-1}(H_\ell)$, see Section~\ref{sec:2.2}.
This means that $P$ and $\Lambda_\ell$ are related by a heterodimensional cycle. We will see that the points in these intersections belong to $\Lambda$ and that they correspond precisely to the points coded by $\cE$.

The following proposition is the main result in this section.

\begin{proposition}\label{prop:locmax}
The set $\Lambda$ is transitive.
\end{proposition}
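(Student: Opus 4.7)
The plan is to adapt the strategy from~\cite[\S 5]{DiaGelRam:11} to the current multi-gap configuration. Fix two non-empty relatively open subsets $U,V\subset\Lambda$. Because $\varpi\colon\Gamma\to\Sigma_{012}$ is a conjugacy, I can shrink $U$ and $V$ to product form $U_\alpha=(\varpi^{-1}[\alpha]\times I_U)\cap\Lambda$ and $V_\beta=(\varpi^{-1}[\beta]\times I_V)\cap\Lambda$, where $\alpha,\beta$ are finite words in $\{0,1,2\}^\ast$ whose associated base cylinders meet $\Lambda$ and $I_U,I_V\subset[0,1]$ are open fibre intervals. It thus suffices to exhibit $n\in\bN$ and a point in $U_\alpha$ whose $n$-th iterate under $F$ lies in $V_\beta$.

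The first step is purely one-dimensional: produce a concrete fibre word that expands a sub-interval of $I_U$ across the fundamental interval $D=[f_0^{-2}(b),f_0^{-1}(b)]$. By Lemma~\ref{l.xJ}, any interval contained in $[f_0^{-2}(b),b]$ admits an expanding itinerary of the form $(0^{n_0}\,1\,0^{m_0}\cdots 0^{n_\ell}\,1\,0^{m_\ell})$ whose associated composition of fibre maps contains $D$ in its image. If $I_U$ is not already in that fundamental domain, then by the structure of $f_0,f_1,f_2$ (repelling behaviour at $0$, attracting behaviour at $1$, and the cycle condition $f_1(1)=0$) some finite composition beginning with a block $f_1\circ f_0^N$ (or, if needed, by iterating $f_2$ first) sends a sub-interval of $I_U$ into $[f_0^{-2}(b),b]$. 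Concatenating these gives a finite word $\omega_U$ with $f_{[\omega_U]}(J)\supset D$ for some $J\subset I_U$. By Corollary~\ref{c.qstar}, iterating the word $\xi(D)$ keeps $D$ inside the image, so I can append further copies of $\xi(D)$ and then a short word $\omega_V$ in $\{0,1,2\}^\ast$ that steers the image onto $I_V$.

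For the base part, since $\varpi(\Gamma)=\Sigma_{012}$ is the full shift, the concatenated word $\alpha\,\omega_U\,\omega_V\,\beta$ is realised by some $\sigma$-orbit: pick $\widehat{x}\in\varpi^{-1}[\alpha]$ whose forward itinerary begins with $\alpha\,\omega_U\,\omega_V$ and subsequently enters $[\beta]$. Combined with the fibre analysis above, choosing the $x$-coordinate in the appropriate sub-interval of $J\subset I_U$ produces a point of $U_\alpha$ whose iterate $F^{|\alpha|+|\omega_U|+|\omega_V|}(\widehat{x},x)$ lies in $V_\beta$, proving transitivity.

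The main obstacle is the coherence between the one-dimensional IFS of Section~\ref{ss.1d} (built from $f_0,f_1$) and the actual fibre dynamics of $F$, which by~\eqref{def:almos} alternates between $f_{\xi_0}$ and $\widetilde f_{\xi_0}$ depending on whether the base point lies in the cylinders fixed by $G_\ell$. This is resolved precisely by the remark at the end of Section~\ref{ss.1d}: every expanding itinerary keeps the fibre trajectory inside $[f_1\circ f_0^N(b),f_0^N(b)]\subset[b,1]$, where $\widetilde f_0=f_0$ by $(\widetilde{\mathrm F}02)$, while $\widetilde f_1=f_1$ by definition. Consequently each occurrence of $\widetilde f_0$ along the constructed itinerary agrees with $f_0$ on the relevant portion of the fibre, so the IFS expansion transfers verbatim to the skew product, and the steps above go through unchanged.
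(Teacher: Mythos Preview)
Your approach is the natural direct one, but it has a genuine gap that the paper's argument is specifically designed to avoid.

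The issue is membership in $\Lambda$. You build a base point $\widehat x$ with prescribed forward itinerary $\alpha\,\omega_U\,\omega_V\,\beta\ldots$ and then pick a fibre coordinate $x$ in some sub-interval $J\subset I_U$, asserting that $(\widehat x,x)\in U_\alpha=(\varpi^{-1}[\alpha]\times I_U)\cap\Lambda$. But $\Lambda$ is a \emph{proper} subset of $\Gamma\times[0,1]$: a point $(\widehat x,x)$ lies in $\Lambda$ only if its \emph{backward} orbit is defined, i.e.\ only if $x$ lies in the nested intersection $\bigcap_{n\ge1} f_{\sigma^{-1}\xi}\circ\cdots\circ f_{\sigma^{-n}\xi}([0,1])$. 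Since several of the fibre maps (notably $f_1$, $f_2$, $\widetilde f_2$, and $\widetilde f_0$) are not surjections of $[0,1]$, this intersection is typically much smaller than $[0,1]$ and may well be a single point. You never specify the backward itinerary of $\widehat x$, and even if you did, there is no reason the particular $x\in J$ produced by your fibre analysis should lie in that intersection. Knowing that $U_\alpha\cap\Lambda\ne\emptyset$ gives you \emph{one} point $(\widehat x_0,x_0)\in\Lambda$, but that point has its own forward itinerary which you then overwrite; the $L_k$-step dependence of the fibre maps means that changing forward symbols can also change the backward fibre dynamics near position~$0$, so you cannot simply import the backward itinerary of $\widehat x_0$.

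The paper sidesteps this entirely by proving the stronger statement $\Lambda=H_\CC(Q^\ast,F)$ (Lemma~\ref{l:relhom}), the homoclinic class of the hyperbolic periodic point $Q^\ast=(\widehat q,q_\ast)$ relative to $\CC$. The argument takes an \emph{arbitrary} $X\in\Lambda$ and, using the stable disk of $X$, the $cu$-disk structure of $W^u(Q^\ast,F)$, and the same expanding-itinerary machinery you invoke, produces transverse homoclinic points of $Q^\ast$ accumulating on $X$. These homoclinic points have orbits entirely in $\CC$ by construction, hence automatically lie in the maximal invariant set $\Lambda$. Transitivity then follows from the general fact that relative homoclinic classes are transitive. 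The key advantage is that one never has to \emph{manufacture} a point of $\Lambda$ with prescribed itinerary; one only has to accumulate on points already known to be in $\Lambda$.

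A secondary (smaller) gap: your coherence discussion covers only the expanding-itinerary portion, where the orbit stays in $[b,1]$ and $\widetilde f_0=f_0$. The initial phase moving $I_U$ into $[f_0^{-2}(b),b]$ and the final steering into $I_V$ may well pass through $(0,b)$ and may involve $\widetilde f_0$ or $\widetilde f_2$ rather than $f_0,f_2$; you do not address why the qualitative picture survives this substitution.
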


The main ingredient of the proof is the existence of expanding  itineraries for the iterated function system generated by the maps $f_0,f_1\colon [0,1]\to [0,1]$.
Let us note that the maps $f_2$, $\widetilde{f}_0$, $\widetilde{f}_2$ do not play any role  in these arguments.

Given the fixed point $q_\ast$  
and the expanding sequence $\xi=\xi(D)$ of the  interval $D=[f_0^{-2}(b), f_0^{-1}(b)]$ provided by Corollary~\ref{c.qstar}, set $\widehat q=\varpi^{-1}(\xi^\bZ)$.
Consider the associated periodic point $Q^\ast =(q^s,q^u,q_\ast)=(\widehat q,q_\ast )\in \Lambda$ of $F$.

\begin{remark}
	Note that the set $\Lambda$ is not locally maximal (recall that a closed invariant set $S$ is locally maximal if there is a small neighborhood $U$ of $S$ such that $S=\bigcap_{k\in\bZ}F^k(U)$). To see why this is so, just consider any point in $\{0\}\times\widehat\CC_1\times\{-\varepsilon\}$ for $\varepsilon>0$ small. This set contains a Cantor set whose forward orbit is contained in $\CC$ and whose backward orbit converges to $(0,0,0)$. Since this holds for every small $\varepsilon$, this prevents $\Lambda$ from being locally maximal.
	
	On the other hand, one can check that $\Lambda$ is the set of all non-wandering points of $F|_U$ provided that $U$ is sufficiently small such that the maps $\widehat f_0,\widehat f_2$ are both positive in this neighborhood.
	
	Situations where the non-wandering set is included in the locally maximal set are, in fact, typical in dynamics involving heterodimensional cycles. The are also present in other non-hyperbolic cases such as in the case of a parabolic point of a rational map  with petal dynamics where the only non-wandering point is the fixed point, which is not locally maximal because there are homoclinic trajectories inside any neighborhood of it (actually, for almost every point both $\alpha$-limit and $\omega$-limit is the fixed point).
\end{remark}

Given the neighborhood $\CC$ of $Q^\ast$, the closure of the points of transverse intersections of the stable and unstable manifolds of the orbit of $Q^\ast$ whose orbits are contained in $\CC$ is  called the \emph{homoclinic class relative} to $\CC$ and denoted by $H_\CC(Q^\ast,F)$. It is well known that relative homoclinic classes are transitive sets (see~\cite[Chapter 10.4]{BonDiaVia:05}). Thus Proposition~\ref{prop:locmax} follows from the next lemma.

\begin{lemma}\label{l:relhom}
	$\Lambda=H_\CC(Q^\ast,F)$.
\end{lemma}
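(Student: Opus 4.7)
The inclusion $H_\CC(Q^\ast,F)\subset\Lambda$ is immediate from the definitions: by construction every transverse homoclinic point of $Q^\ast$ relative to $\CC$ has its full orbit contained in $\CC$, and $\Lambda$ is the maximal such invariant set; since $\Lambda$ is closed the closure is also contained.

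For the reverse inclusion $\Lambda\subset H_\CC(Q^\ast,F)$ the plan is to adapt the scheme of \cite{DiaGelRam:11}, using the expanding itineraries of Lemma~\ref{l.xJ} and Corollary~\ref{c.qstar} as the main tool. Fix $X=(\widehat x,x)\in\Lambda$. The strategy is to produce sequences of transverse homoclinic intersections of the orbit of $Q^\ast$ (all of whose orbits are contained in $\CC$) converging to $X$. First, approximate the base coordinate: using that $\Phi$ is conjugate to the full shift on three symbols, choose periodic codings $\xi^{(n)}$ whose initial segment of length $n$ matches $\varpi(\widehat x)$ and whose remainder contains the defining word $\xi(D)$ of $Q^\ast$ as a subword repeated sufficiently often. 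This yields $\widehat x_n=\varpi^{-1}(\xi^{(n)})\to\widehat x$ in the base.

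Second, handle the fibre coordinate. The local unstable manifold of $Q^\ast$ contains the segment $\{\widehat q\}\times D$ by Corollary~\ref{c.qstar}. Taking a $u$-disk inside $W^u_{\mathrm{loc}}(Q^\ast)$ that is a graph over the base coordinate and iterating it forward along the chosen itinerary allows to transport $D$ through the IFS. By Lemma~\ref{l.xJ}, concatenating expanding itineraries the image of $D$ under the fibre composition $f_{[\xi_0\ldots\xi_m]}$ contains an interval arbitrarily close to the full fibre $[0,1]$ and in particular covers $x$. This produces a transverse intersection of an iterate of $W^u(Q^\ast,F)$ with a tiny stable disc through $X$; since the involved cylinders are subsets of $\CC$ by construction of the $L_k$-step skew product, the whole connecting orbit stays in $\CC$. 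The symmetric argument with $W^s(Q^\ast,F)$ and the contracting half of the IFS (the dynamics of $f_0$ near its fixed point $p_0=1$ combined with the cycle condition $f_1(1)=0$ of (F1)) provides a transverse homoclinic intersection of $Q^\ast$ relative to $\CC$ arbitrarily close to $X$, completing the inclusion.

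The main obstacle is verifying that the homoclinic orbits so produced stay in $\CC$ for all iterates, not just during the coding window: this is where conditions (F01) and (\~F02), together with the fact that the IFS construction only involves itineraries whose fibre orbits remain inside $[f_1\circ f_0^N(b), f_0^N(b)]\subset(0,1)$, are essential. Once this is ensured, transversality of the intersections follows from the domination $E^\sst\oplus E^c\oplus E^\uut$ and the fact that iterates of $u$-discs remain graphs transverse to $E^\sst$ while iterates of $s$-curves remain transverse to $E^c\oplus E^\uut$. The final conclusion then follows because the relative homoclinic class is closed and $X\in\Lambda$ was arbitrary.
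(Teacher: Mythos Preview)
Your sketch has the right ingredients (the expanding itineraries of Lemma~\ref{l.xJ}, the cycle condition (F1), the observation that orbits stay inside $[f_1\circ f_0^N(b),f_0^N(b)]$), but the logical structure is off in the second half, and the ``symmetric argument'' you invoke does not exist in this system.

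The paper's proof proceeds differently and more directly. Given $X=(x^s,x^u,x)\in\Lambda$ with $x\in(0,1)$, one first shows that $W^u(Q^\ast,F)$ contains a $cu$-disk $\Delta$ arbitrarily close to $X$: this is obtained by iterating the small \emph{strong-stable} segment $\Delta^s_\delta(X)$ \emph{backward} (where $F^{-1}$ uniformly expands the $s$-direction) until it meets $W^u(Q^\ast,F)$; no symbolic periodic codings are needed for this step. Then---and this is the step your sketch does not supply---one iterates that $cu$-disk $\Delta\subset W^u(Q^\ast,F)$ \emph{forward}: first the uniform $u$-expansion stretches $\Delta$ across the full $u$-direction; then the cycle configuration $f_1(1)=0$ lets one push the central interval of $\Delta$ into $(0,b)$; finally the expanding itinerary $\xi(I_2)$ from Lemma~\ref{l.xJ} brings the central interval over $q_\ast$, so the iterate of $\Delta$ transversely meets $W^s_{\rm loc}(Q^\ast,F)=[0,1]\times\{(q^u,q_\ast)\}$. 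The preimage of this intersection point lies in $\Delta$, hence is a homoclinic point of $Q^\ast$ in $\CC$ arbitrarily close to $X$.

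Your proposal instead tries to show separately that $W^u(Q^\ast)$ and $W^s(Q^\ast)$ both accumulate on $X$ and then combine these. But the second accumulation is not available by any ``symmetric'' mechanism: $W^s(Q^\ast,F)$ is one-dimensional (contained in the $ss$-direction at the single fibre height $q_\ast$), and the inverse IFS $\{f_0^{-1},f_1^{-1},f_2^{-1}\}$ has no analogue of Lemma~\ref{l.xJ} under the stated hypotheses (F0)--(F012). Moreover, even if both manifolds accumulated on $X$, you would still owe an argument that their \emph{transverse intersections} accumulate on $X$ with orbits staying in $\CC$; this is exactly what the forward iteration of $\Delta$ in the paper accomplishes in one stroke. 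Replace your ``symmetric argument'' paragraph by the forward-iteration claim (iterating the $cu$-disk in $W^u(Q^\ast)$ until it hits $W^s_{\rm loc}(Q^\ast)$) and the proof goes through.
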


\begin{proof}
This lemma is similar to~\cite[Proposition 3.3]{DiaGelRam:11} and is derived after modifying some of the arguments there, so we will just emphasize these modifications.
We start stating some topological properties of the invariant manifolds of $Q^\ast$. Note that
\begin{equation}\label{e.manifolds}
\begin{split}
	\{q^s\}\times [0,1]\times D
	&\subset W^u(Q^\ast , F) \quad \text{and}\\
	[0,1]\times \{(q^u, q_\ast )\}
	&\subset W^s(Q^\ast , F).
\end{split}
\end{equation}
An immediate consequence of the cycle configuration and Corollary~\ref{c.qstar} is that $W^u(Q^\ast,F)$ in its projection covers the whole square $\{0^s\} \times [0,1]\times (0,1)$. More precisely, for all $x\in (0,1)$ there are $x^s\in (0,1)$, $\varepsilon(x)>0$ with
\begin{equation}\label{e.unstablemanifold}
 \{x^s\} \times [0,1]\times [x-\varepsilon(x), x+\varepsilon(x) ]\subset
 W^u(Q^\ast,F),
\end{equation}

Take any point $X=(x^s, x^u,x)\in \Lambda$. In what follows let us assume that $x\in (0,1)$. The case $x\in \{0,1\}$ is indeed simpler and is treated identically as in \cite{DiaGelRam:11}, so we will omit this case. 

Given small $\delta>0$, consider the stable disk of $X$ 
$$
	\Delta^s_\delta(X)
	\eqdef [x^s-\delta, x^s+\delta]\times \{(x^u,x)\}.
$$
Note that $F^{-i}(X)= X_i= (x^s_i,x^u_i,x_i)\in \CC$, $x_i\in (0,1)$, for all $i\ge 0$.
As $F^{-1}$ uniformly expands the $s$-direction, by equation~\eqref{e.unstablemanifold}, there is $n>0$ with
\[
	F^{-n}(\Delta^s_\delta(X))\pitchfork W^u (Q^\ast , F).
\]	
Hence $\Delta^s_\delta(X)$ intersects $W^u(Q^\ast,F)$ in a point $X(\delta)=(x^s(\delta),x^u,x)$
with $x^s(\delta)\in [x^s-\delta,x^s+\delta]$.
For small $\varepsilon>0$ consider the $cu$-disk
$$
\Delta=
	\Delta_\varepsilon^{cu}(X(\delta))
	\eqdef \{x^s(\delta)\}
               \times [x^u-\varepsilon,x^u+\varepsilon]
               \times [x-\varepsilon,x+\varepsilon]
	\subset W^u (Q^\ast , F).
$$

\begin{claim*}
\emph{
	There are a disk $\Delta_0 \subset \Delta$ and a positive integer $n$ such that $F^j(\Delta_0) \subset \CC$
	for all $0\le j\le n$ and $F^n(\Delta_0)$ transversely intersects
	$W^s_{\rm loc}(Q^\ast ,F)$.
}	
\end{claim*}
The claim implies that $\Delta_0 \cap H_\CC(Q^\ast ,F)$. As $\delta,\,\varepsilon$ can be chosen arbitrarily small the
there are points of  $H_\CC(Q^\ast ,F)$
 arbitrarily close to $X$. Thus $X\in  H_\CC(Q^\ast ,F)$  implying the lemma.

\begin{proof}[Proof of the claim]
By expansion in the $u$-direction, there is some $m_0>0$ such that $F^{m_0}(\Delta)$ transversely intersects the subset $[0,1]\times (0,1) \times \{0\}$ of $W^s(P,F)$. Further, again by expansion in the $u$-direction, after new iterations we get $m_1>0$ such  that $F^{m_0+m_1} (\Delta)$ stretches over the full unstable direction, that is, it contains a fully stretching disk of the form
\[
	\Delta_1=\{x_1^s\} \times [0,1] \times I_1\quad\text{ for some }\quad
		x_1^s\in [0,1],\,\,I_1\subset (0,1)
\]		
(Step 1 in Figure~\ref{fig.steps}). Now, by the cycle configuration, the interval $I_1$ can be moved by the IFS generated by $f_0,f_1$ into the interval $(0,b)$ (Step 2) which for the disk means that there is $m_2>0$ with
\[
	F^{m_0+m_1+m_2} (\Delta)
	\supset \Delta_2
	=\{x_2^s\} \times [0,1] \times I_2, \quad
x_2^s\in [0,1],\,\, I_2\subset (0,b).
\]
Finally, considering further forward iterates we can assume that the central interval
$I_2$ of $\Delta_2$ satisfies $I_2\subset [f_0^{-2}(b),b]\subset
[f_1\circ f_0^N(b), f_0^N(b)]$ (Step 3).
\begin{figure}
\begin{minipage}[c]{\linewidth}
\centering
\vspace{0.5cm}
\begin{overpic}[scale=.30,
  ]{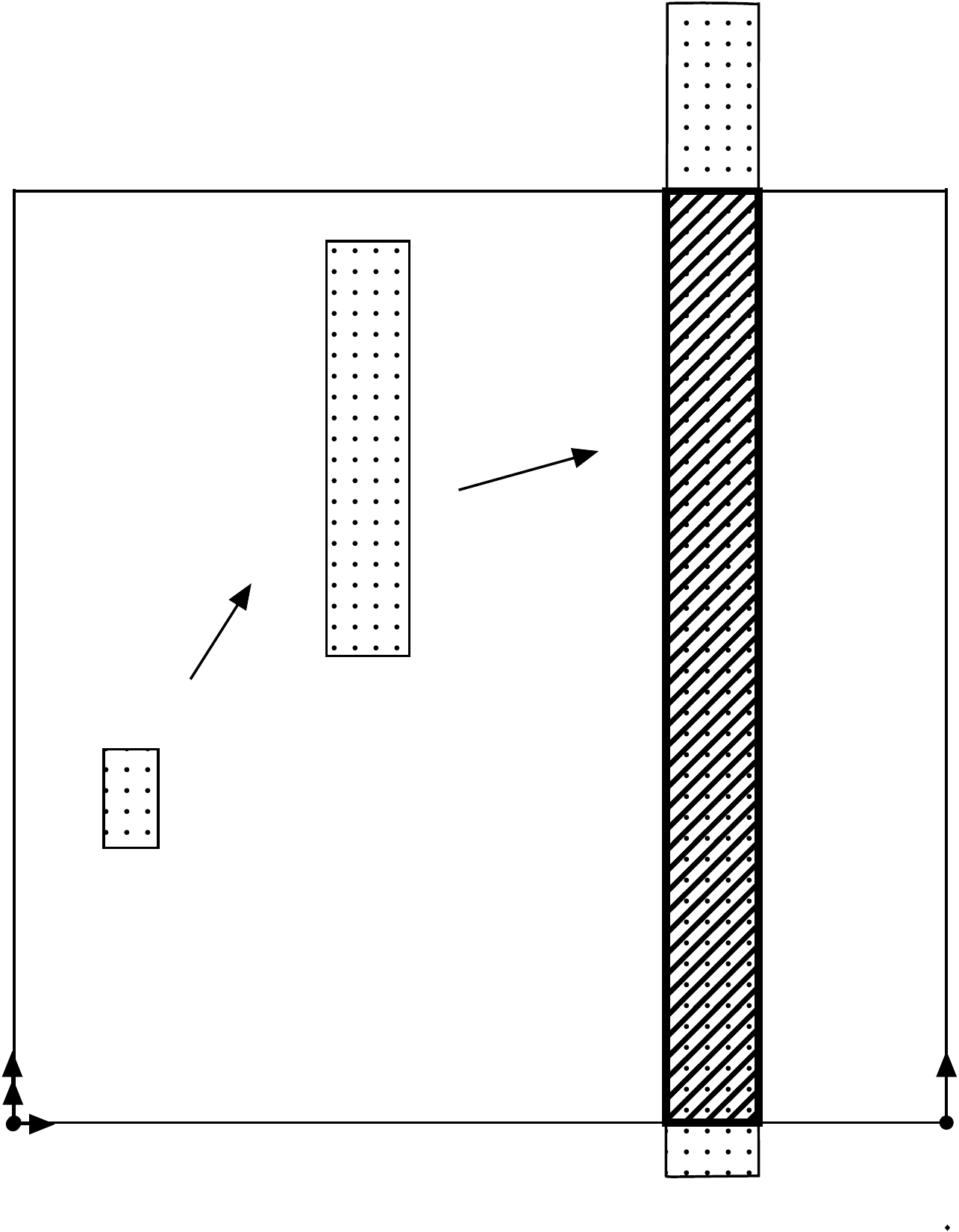}	
    	\put(67,73){$\Delta_1$}	
    	\put(75,0){\small $1$}	
    	\put(0,0){$0$}		
  \end{overpic}
  \hspace{2cm}
  \begin{overpic}[scale=.30,
  ]{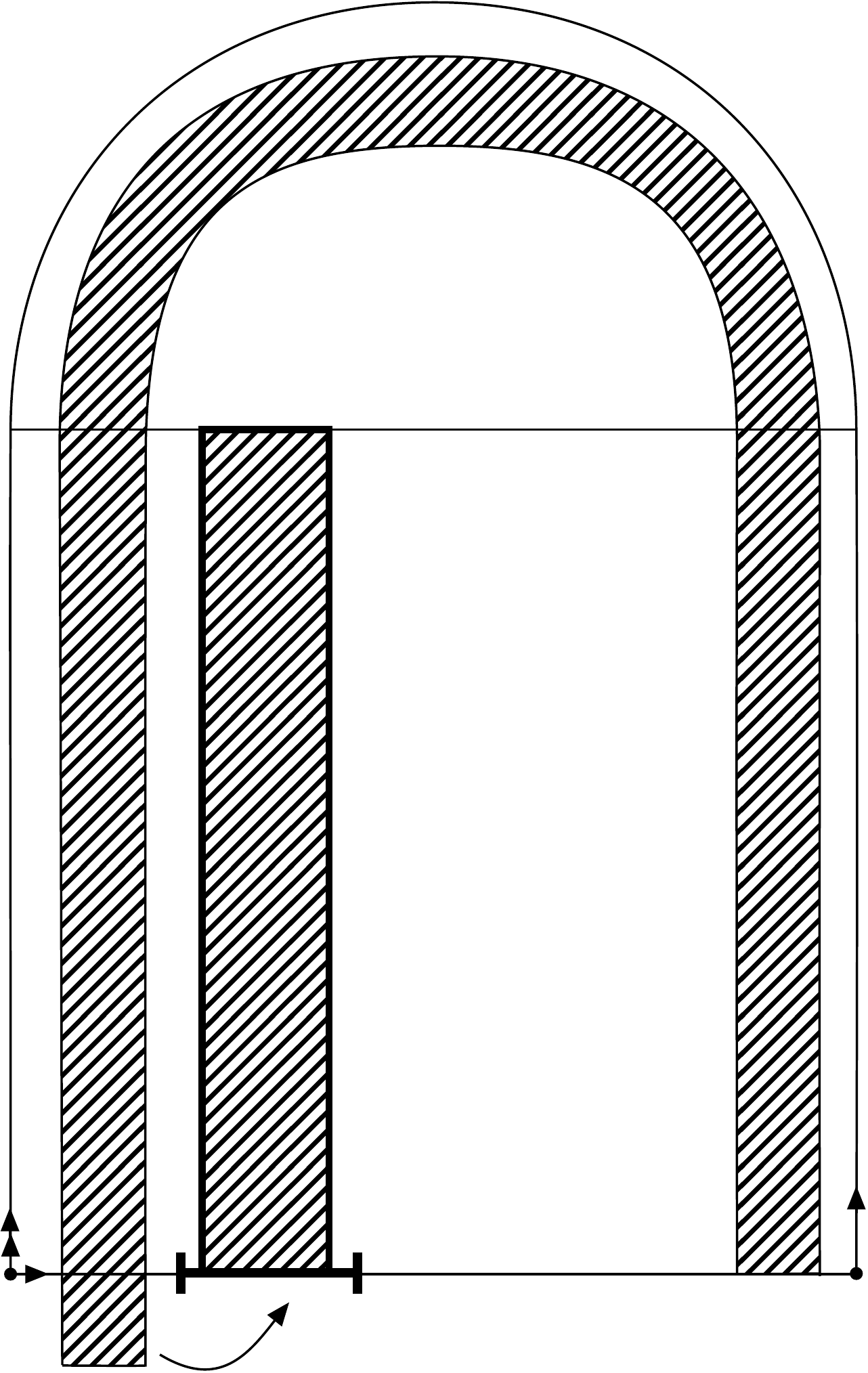}	
    	\put(26,17){$\Delta_2$}	
	\put(61,0){$1$}	
    	\put(0,0){$0$}
  \end{overpic}
\end{minipage}
\caption{Steps 1 and 3}
\label{fig.steps}
\end{figure}

In what follows our arguments only involve iterations of the fibre maps $f_0$, $\tilde f_{0}$ in $[f_1\circ f_0^N(b), f_0^N(b)]$ where these maps coincide (see (\~F02)).
Consider the expanding sequence  $\xi(I_2)=(\xi_0\ldots \xi_k)$ of $I_2$ and the disk
$$
	\Delta_{[\xi_0\ldots\xi_k]}\eqdef
	\Delta_2  \cap \big( \widehat\CC_{[\xi_0\ldots\xi_k]} \times I_2\big),
$$
where $\widehat\CC_{[\xi_0\ldots\,\xi_k]}$ is the sub-cube of $\widehat\CC$ defined in Section~\ref{sec:2.2}. By definition of the skew product dynamics, there is $\overline x^s\in [0,1]$ such that
\[
	F^{k+1}(\Delta_2)
	= \{\overline x^s\} \times [0,1]\times f_{[\xi_0\ldots\,\xi_k]}(I_2)
	\subset \{\overline x^s\} \times [0,1]\times [f_0^2(b),f_0^{-1}(b)],
\]
where the last inclusion follows from the definition of an expanding sequence, see Lemma~\ref{l.xJ}. Hence,~\eqref{e.manifolds} implies that $F^{k+1}(\Delta_{[\xi_0\ldots\xi_k]})$ transversely  meets $W^s(Q^\ast,F)$.
To complete the proof observe that the iterations of the part of the disc $\Delta$ which we consider remain completely in $\CC$.
\end{proof}
The proof of the lemma is now complete.
\end{proof}

\section{Proof of Theorem~\ref{t1}}\label{s:PROOF}

First recall that by Proposition~\ref{prop:locmax} the set $\Lambda$ is transitive. So what remains to prove is the existence of the rich phase transitions.

Birkhoff averages of $t\,\phi$ with respect to the shift map $\sigma$ naturally correspond to the central Lyapunov exponents of the skew product $F$.
Given $(\widehat x,0)\in\CC^3$, consider the \emph{upper central Lyapunov exponent} of $(\widehat x,0)$ with respect to $F$ given by
\[\begin{split}
		\overline\chi_c(\widehat x,0)&\eqdef
		\limsup_{n\to\infty}\frac 1 n \log\,\lVert dF^n|_{E^c(\widehat x,0)}\rVert\\
		&=\limsup_{n\to\infty}\frac1n\log\,\lvert (f_\xi^n)'(0)\rvert
		=\overline\chi(\varpi(\widehat x),0).
\end{split}\]

Consider the set $\Lambda_\bH$ defined in~\eqref{defLLLL}.
We split the set of \emph{ergodic} measures $\cM_\Lambda$ supported on $\Lambda$ into two disjoint sets: the set $\cM_\bH$ and $\widetilde \cM$ defined as the set of ergodic measures supported $\Lambda_{\bH}$ and its complement $\widetilde\cM$ in the $\cM_\Lambda$. For $\mu\in\cM_\Lambda$ we denote the \emph{central Lyapunov exponent} of $\mu$ by
\[
	\chi_c(\mu)\eqdef\int\varphi\,d\mu=\int\log\,\lVert dF|_{E^c}\rVert\,d\mu.
\]
The following result about the central exponents in $\cM$ is an immediate consequence of Remark~\ref{remremrem}.

\begin{lemma}\label{l:lorenzo}
	For $\mu$ ergodic, we have $\chi_c(\mu)\in[\log \beta_2, \log \beta_0]$ if, and only if, $\mu\in \cM_\bH$. Indeed, $\chi_c(\mu)\in[\alpha_\ell^-,\alpha_\ell^+]$ for some $\ell=1,\ldots,k$. Furthermore, any $\mu\in\widetilde\cM$ satisfies $\chi_c(\mu)\le\log \widetilde \beta$.
\end{lemma}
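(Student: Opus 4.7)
The plan is to reduce the claim to a statement about upper fibre Lyapunov exponents and then invoke the spectral gap of Remark~\ref{remremrem}. Since $F$ is a step skew product with one-dimensional centre, $\lVert dF^n|_{E^c(\widehat x, x)}\rVert = \lvert(f_\xi^n)'(x)\rvert$ where $\xi = \varpi(\widehat x)$. Hence Birkhoff's theorem applied to an ergodic $\mu\in\cM_\Lambda$ yields $\chi_c(\mu) = \overline\chi(\varpi(\widehat x), x)$ for $\mu$-a.e.\ $(\widehat x, x)\in\Lambda$, reducing the problem to locating these fibre exponents depending on whether $\mu$ belongs to $\cM_\bH$ or to $\widetilde\cM$.

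If $\mu\in\cM_\bH$, the push-forward $\varpi_*\mu$ is a $\sigma$-invariant probability supported on $\bH = H_1\sqcup\cdots\sqcup H_k$. Ergodicity together with invariance and mutual disjointness of the $H_\ell$'s forces $\varpi_*\mu$ to live on a single piece $H_\ell$. Equation~\eqref{formula} then gives $\overline\chi(\xi, 0)\in[\alpha_\ell^-, \alpha_\ell^+]\subset[\log\beta_2, \log\beta_0]$ for $\mu$-a.e.\ point, which proves the second assertion and the forward direction of the equivalence.

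If $\mu\in\widetilde\cM$, the target is $\chi_c(\mu)\le\log\widetilde\beta$, and by Remark~\ref{remremrem} it is enough to check $\mu(\cE)=0$. The first summand of $\cE$ in~\eqref{exceexce} coincides with $\Lambda_\bH$, which is $\mu$-null by assumption. The second summand is contained in the set of $(\widehat x,x)\in\Lambda$ with $(\varpi(\widehat x))_j=0$ for all $j<0$. For any $\sigma$-invariant probability $\nu$ on $\Sigma_{012}$, the sets $B_n = \sigma^{-n}\{\xi\colon \xi_j=0\ \forall j<0\}$ form a decreasing sequence of common $\nu$-measure whose intersection is the singleton $\{0^\bZ\}$; hence $\nu\{\xi\colon \xi_j=0\ \forall j<0\} = \nu\{0^\bZ\}$. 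Applied to $\nu = \varpi_*\mu$ this kills the second summand, unless $\varpi_*\mu = \delta_{0^\bZ}$.

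In the remaining degenerate case $\varpi_*\mu = \delta_{0^\bZ}$, the measure $\mu$ is supported on the fibre $\{\varpi^{-1}(0^\bZ)\}\times [0,1]$. Since $i_1 = 0$, the cylinder $[0^{L_1}]$ belongs to $G_1$, so \eqref{def:almos} selects the fibre map $f_0$ there (not $\widetilde f_0$). By (F0) the only ergodic $f_0$-invariant probabilities on $[0,1]$ are $\delta_0$ and $\delta_1$; the first yields a Dirac in $\Lambda_\bH\subset\cM_\bH$, contradicting $\mu\in\widetilde\cM$, while the second gives central exponent $\log\lambda_0 < 0 < \log\widetilde\beta$. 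This establishes the inequality in all cases, and the ``iff'' of the first assertion follows since $\log\widetilde\beta < \log\beta_2$ separates the two regimes. The main delicate step I anticipate is the invariance argument for the backward-all-zero set together with verifying that the fibre over $0^\bZ$ truly uses $f_0$; the rest is bookkeeping.
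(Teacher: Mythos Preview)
Your approach is correct and is precisely what the paper intends when it declares the lemma an ``immediate consequence'' of Remark~\ref{remremrem}; you have simply unpacked the ergodic-theoretic bookkeeping the paper leaves implicit.

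One small correction is needed. The second summand of $\cE$ in~\eqref{exceexce} is \emph{not} contained in $\{(\widehat x,x):(\varpi(\widehat x))_j=0\text{ for all }j<0\}$: for the points $\big((0^{-\bN}1\xi_k.\xi^+),0\big)$ one has $\xi_{-k-1}=1$. The correct envelope is the set of $(\widehat x,x)$ with $(\varpi(\widehat x))_j=0$ for all sufficiently negative $j$. Your invariance argument still applies with no new idea: this set is the increasing union $\bigcup_{N\ge 0}\sigma^N A$ where $A$ is your backward-all-zero set, each $\sigma^N A$ has the same $\varpi_*\mu$-measure as $A$, and you have already shown that common value to be $\varpi_*\mu\{0^{\bZ}\}$. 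After this adjustment the degenerate-case analysis and the rest of the proof go through exactly as you wrote them.
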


We consider the pressure functions
\[
	\widetilde P(t) \eqdef
	\sup_{\mu\in\widetilde\cM} \Big(h_\mu(F)+t\int\varphi\,d\mu\Big),
	\quad
	P_\bH(t) \eqdef
	\sup_{\mu\in\cM_\bH} \Big(h_\mu(F)+t\int\varphi\,d\mu\Big).
\]
Note that the topological pressure satisfies
\[
	P(t)\eqdef P(t\,\varphi)
		= \max\big\{ P_\bH(t),\widetilde P(t)\big\}.
\]
The following lemma and Proposition~\ref{p:main} immediately imply the theorem.

\begin{lemma}\label{lemmmmm}
	For $t\le t_1$ we have $P(t)=P_\bH(t)=\bP(t\,\phi)$.
\end{lemma}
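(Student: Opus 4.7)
The claim has two parts: $P_\bH(t)=\bP(t\phi)$ for all $t$, and $P(t)=P_\bH(t)$ for $t\le t_1$; together these give the full assertion. The first part I would handle by checking that the projection $(\hat x,0)\mapsto\varpi(\hat x)$ is a topological conjugacy from $(F|_{\Lambda_\bH},\Lambda_\bH)$ onto $(\sigma|_\bH,\bH)$: the fibre $\{x=0\}$ is invariant because $f_0(0)=f_2(0)=0$, and Lemma~\ref{l:hetero} places every $\xi\in\bH$ in some $G_\ell$, so the fibre map selected in~\eqref{def:almos} is $f_{\xi_0}$ with $\xi_0\in\{0,2\}$. Hence $\varphi(\hat x,0)=-\log|f'_{\xi_0}(0)|=-\log\beta_{\xi_0}=\phi(\varpi(\hat x))$. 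Push-forward by $\varpi$ preserves entropy and transports $\int\varphi\,d\mu$ to $\int\phi\,d(\mu\circ\varpi^{-1})$, so the variational principles give $P_\bH(t)=\bP(t\phi)$.

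For the second part I need $\widetilde P(t)\le P_\bH(t)$ whenever $t\le t_1$. The plan is to combine an upper bound on $\widetilde P$, a lower bound on $P_\bH$, and the positional information $t_1<t_0$. Every $\mu\in\widetilde\cM$ has $h_\mu(F)\le\log 3$ (the topological entropy of the base full $3$-shift dominates that of the skew product) and $\chi_c(\mu)\le\log\widetilde\beta$ by Lemma~\ref{l:lorenzo}, so for $t\le 0$,
\begin{equation*}
\widetilde P(t)=\sup_{\mu\in\widetilde\cM}\bigl(h_\mu(F)-t\chi_c(\mu)\bigr)\le\log 3-t\log\widetilde\beta.
\end{equation*}
For the lower bound, the condition $i_1=0$ forces $0^{\bZ}\in H_1\subset\bH$, so the Dirac measure at the associated fixed point $(\varpi^{-1}(0^{\bZ}),0)\in\cM_\bH$ has entropy $0$ and Birkhoff integral $\int\varphi\,d\mu=-\log\beta_0$, yielding $P_\bH(t)\ge-t\log\beta_0$.

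The number $t_0=(\log 3-\log 2)/(\log\widetilde\beta-\log\beta_0)$ is by construction the intersection of the lines $p=\log 2-t\log\beta_0$ and $p=\log 3-t\log\widetilde\beta$, and since the former has the steeper (more negative) slope, $\log 3-t\log\widetilde\beta\le\log 2-t\log\beta_0$ for every $t\le t_0$. By Proposition~\ref{p:main} we have $t_1\le\tau_1^-<t_0$, so for $t\le t_1$ the chain
\begin{equation*}
\widetilde P(t)\le\log 3-t\log\widetilde\beta\le\log 2-t\log\beta_0\le P_\bH(t)
\end{equation*}
would close the argument, giving $P(t)=P_\bH(t)=\bP(t\phi)$ on $(-\infty,t_1]$; Proposition~\ref{p:main} then transfers its $k$ rich phase transitions of $\bP$ to $P$.

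The step I expect to demand the most care is the last inequality, $\log 2-t\log\beta_0\le P_\bH(t)$, since the Dirac measure at $0^{\bZ}$ alone only delivers the weaker bound $P_\bH(t)\ge-t\log\beta_0$. To close the $\log 2$ gap I would either sharpen the lower bound by concatenating $(i_\ell,j_\ell,L_\ell)$-admissible blocks via Lemma~\ref{mixtime} to produce a measure on $\bH$ with entropy of order $\log 2$ on an auxiliary sub-shift while keeping the Birkhoff average of $\phi$ close to $-\log\beta_0$, or equivalently refine the upper bound on $\widetilde P$ by splitting $\widetilde\cM$ according to whether the base marginal gives zero mass (upgrading $\log 3$ to $\log 2$) or positive mass to the symbol $1$ (exploiting the contracting factor $\gamma$ to strengthen the $\chi_c$-bound). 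Either route relies on $|t_1|\gtrsim\log L_1$ being made large via the recursive choice of $L_\ell$ (as in the proof of Lemma~\ref{l:2}), and either makes the comparison uniform on $(-\infty,t_1]$.
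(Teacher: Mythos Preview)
Your strategy is exactly the paper's: establish $P_\bH=\bP$ via the conjugacy $\varpi$ (the paper compresses this to ``by construction, it is immediate''), bound $\widetilde P(t)\le\log 3-t\log\widetilde\beta$ using Lemma~\ref{l:lorenzo}, and then compare with $P_\bH$ through the threshold $t_0$ and the fact $t_1<t_0$. The paper is terser than you are at the comparison step: it simply asserts $\bP(t\phi)\ge\log 3-t\log\widetilde\beta$ for $t\le t_0$ after recalling that $t_0$ is where the lines $\log 2-t\log\beta_0$ and $\log 3-t\log\widetilde\beta$ cross. You are right to flag that the Dirac measure at $0^{\bZ}$ only delivers $P_\bH(t)\ge-t\log\beta_0$, which is short of the line $\log 2-t\log\beta_0$ by $\log 2$.

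Where your proposal runs into trouble is in the two routes you sketch to close that gap. Route~1 cannot work: one cannot manufacture a measure on $\bH$ with entropy of order $\log 2$, because $h(\sigma|_{\bH})=h_1$ and Proposition~\ref{prop:entropy} gives $h_1\asymp(\log L_1)/L_1\ll\log 2$. Route~2 (splitting $\widetilde\cM$ by the mass the base gives to the symbol $1$) at best improves the upper bound to $\log 2-t\log\widetilde\beta$ in the zero-mass case, which is still not dominated by $-t\log\beta_0$ on the whole range $t\le t_0$. Finally, your claim $|t_1|\gtrsim\log L_1$ misreads the proof of Lemma~\ref{l:2}: the estimate $|\tau_\ell^+|\approx\log L_\ell$ there uses that $L_\ell-i_\ell$ is bounded, which holds only for $\ell\ge 2$. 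Since $i_1=0$ one has $\alpha_1^-=\log\beta_2$, so $|\tau_1^+|\approx h_1/(\log\beta_0-\log\beta_2)$, which becomes \emph{small}, not large, as $L_1$ grows; hence $|t_1|$ cannot be pushed in the direction you want by enlarging $L_1$. The genuine remedy is a constraint on the construction (arranging $t_1$ to lie left of the point where $-t\log\beta_0$ meets $\log 3-t\log\widetilde\beta$, i.e.\ $t_1\le -\log 3/(\log\beta_0-\log\widetilde\beta)$), not an entropy or growth argument of the kind you propose.
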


\begin{proof}
	By construction, it is immediate that $P_\bH(t)=\bP(t\,\phi)$ for every $t$.

	To conclude the proof of the proposition, observe that by Lemma~\ref{l:lorenzo} we have
\[
	\widetilde P(t)	\le \log3-t\log\widetilde\beta
	\quad\text{ for every }t\le0.
\]	
Recall the definition of the parameter $t_0<0$ in~\eqref{e:important} that is the intersection of the straight lines $(t,\log 2-t\log\beta_0)$ and $(t,\log3-t\log\widetilde\beta)$. Hence,
\[
	\bP(t\,\phi)\ge \log3-t\log\widetilde\beta
	\quad\text{ for every }t\le t_0.
\]	
Therefore, for $t\le t_0$
\[
	P(t)
	= \max\big\{P_\bH(t),\widetilde P(t)\big\}=
	\bP(t).
\]
This proves the lemma.
\end{proof}

To obtain the equilibrium states it suffices to observe that $\Lambda_\bH\subset \Gamma\times\{0\}$ and to consider the conjugation $\varpi\colon\Gamma\to\Sigma_\bH$. Observe that, in fact,
\[
	P_{\sigma|\bH}(t\,\phi)
	= P_{\Phi|\Gamma}(t\,\phi\circ\varpi)
	= P_{F|\Lambda_\bH}(t\,\varphi),
\]
where $\phi$ was defined in~\eqref{eq2}.
For every $\ell=1,\ldots,k$, consider the equilibrium measures $\mu_\ell^\pm$ for $t_\ell\,\varphi$ with respect to $\sigma|_\bH$ provided by Proposition~\ref{p:main}. Define the measures $\nu_\ell^\pm\in \cM_\bH$ as follows: given $B=\widehat B\times\{0\}\subset \Lambda_\bH$ let
\[
	\nu_\ell^\pm(B)=\mu_\ell^\pm(\varpi(\widehat B)).
\]
By a standard argument, $\nu_\ell^\pm$ are equilibrium states for $t_\ell\,\varphi$ with respect to $F|\Lambda_\bH$. Hence, by Lemma~\ref{lemmmmm},  they are equilibrium states for $t_\ell\,\varphi$ with respect to $F|\Lambda$. This finishes the proof of the theorem.
\hfill$\square$

\bibliography{ref}

\begin{thebibliography}{WW}

\bibitem{AbdBonCroDiaWen:07} F.~Abdenur, Ch.~Bonatti, S.~Crovisier, L.~J.~D\'iaz, L.~Wen, \emph{Periodic points and homoclinic classes}, Ergodic Theory Dynam. Systems \textbf{27} (2007), 1--22.

\bibitem{BonDia:12} Ch.~Bonatti and L.~J.~D\'iaz, \emph{Fragile cycles}, J. Diff. Eq. \textbf{252} (2012), 4176--4199.

\bibitem{BonDiaVia:05} Ch.~Bonatti, L.~J.~D\'iaz, and M.~Viana, \emph{Dynamics Beyond Uniform Hyperbolicity. A Global Geometric and Probabilistic Perspective}, Mathematical Physics, III (Encyclopedia of Mathematical Sciences, 102), Springer, Berlin, 2005.

\bibitem{Bow:08} R.~Bowen, \emph{Equilibrium States and the Ergodic Theory of Anosov Diffeomorphisms} (Lecture Notes in Mathematics, 470), Springer, Berlin, 2008.

\bibitem{CorRiv:a} D.~Coronel and J.~Rivera-Letelier, \emph{Low-temperature phase transition in the quadratic family}, preprint.

\bibitem{CorRiv:b} D.~Coronel and J.~Rivera-Letelier, \emph{Infinite-order phase transitions in the quadratic family}, preprint.

\bibitem{CowYou:05} W.~Cowieson and L.-S.~Young, \emph{SRB measures as zero-noise limits}, Ergodic Theory Dynam. Systems \textbf{25} (2005), 1115--1138.

\bibitem{DiaFis:11} L.~J.~D\'iaz and T.~Fisher, \emph{Symbolic extensions and partially hyperbolic diffeomorphisms}, Discrete Contin. Dyn. Syst. \textbf{29} (2011), 1419--1441.

\bibitem{DiaGel:12} L.~J.~D\'iaz and K.~Gelfert, \emph{Porcupine-like horseshoes: Transitivity, Lyapunov spectrum, and phase transitions}, Fund. Math. \textbf{216} (2012), 55--100.

\bibitem{DiaGelRam:11} L.~J.~D\'iaz, K. Gelfert, and M. Rams, \emph{Rich phase transitions in step skew products}, Nonlinearity \textbf{24} (2011), 3391--3412.

\bibitem{DiaGelRam:} L.~J.~D\'iaz, K. Gelfert, and M. Rams, \emph{Almost complete Lyapunov spectrum in step skew-products}, Dynamical Systems {\tt doi:10.1080/14689367.2012.753992}.

\bibitem{DiaHorRioSam:09} L.~J.~D\'iaz, V.~Horita, I.~Rios, M.~Sambarino, \emph{Destroying horseshoes via heterodimensional cycles: generating bifurcations inside homoclinic classes}, Fund. Math. \textbf{29} (2009), 433--473.

\bibitem{Dob:09} N.~Dobbs, \emph{Renormalisation induced phase transitions for unimodal maps}, Comm. Math. Phys. \textbf{286} (2009), 377--387.

\bibitem{Ily:10} Yu. Ilyashenko, \emph{Thick and bony attractors}, conference talk at the Topology, Geometry, and Dynamics: Rokhlin Memorial, St. Petersburg, 2010.

\bibitem{IomTod:} G.~Iommi and M.~Todd, \emph{Transience in dynamical systems}, Ergodic Theory Dynam. Systems, {\tt doi:10.1017/S0143385712000351}.

\bibitem{Kud:10} Yu. G. Kudryashov, \emph{Bony attractors}, Funct. Anal. Appl. 44 (2010), 219?222.

\bibitem{LepOliRio:11} R.~Leplaideur, K.~Oliveira, and I.~Rios, \emph{Equilibrium states for partially hyperbolic horseshoes}, Ergodic Theory Dynam. Systems \textbf{31} (2011), 179--195.

\bibitem{PrzRiv:11} F.~Przytycki and J.~Rivera-Letelier, \emph{Nice inducing schemes and the thermodynamics of rational maps},  Commun. Math. Phys. \textbf{301} (2011), 661--707.

\bibitem{Rue:78}
D.~Ruelle, \emph{Thermodynamic Formalism. The Mathematical Structures of Classical Equilibrium Statistical Mechanics},  Encyclopedia of Mathematics and its Applications, Addison-Wesley, 1978.

\bibitem{Sar:00} O.~Sarig, \emph{On an example with a non-analytic topological pressure}, C. R. Acad. Sci. Paris S\'er. I Math. \textbf{330} (2000), 311--315.

\bibitem{Wal:81} P.~Walters, \emph{An Introduction to Ergodic Theory} (Graduate Texts in Mathematics, 79), Springer, Berlin, 1981.

\end{thebibliography}

\end{document}